\providecommand{\remove}[1]{}
\newcommand{\HD}{\ensuremath{\mathsf{HD}}\xspace}
\renewcommand{\Re}{\mathbb{R}}
\newcommand{\C}{\mathcal{C}}
\renewcommand{\L}{\mathcal{L}}
\newcommand{\F}{\mathcal{F}}
\newcommand{\G}{\mathcal{G}}
\newcommand{\h}{\mathcal{H}}
\newtheorem{theorem}{Theorem}[section]
\newtheorem{proposition}[theorem]{Proposition}
\newtheorem*{theorem*}{Theorem}
\newtheorem*{lemma*}{Lemma}
\newtheorem*{proposition*}{Proposition}
\begin{document}
\title{A New Lower Bound on Hadwiger-Debrunner Numbers in the Plane}

\author{Chaya Keller\thanks{Department of Mathematics, Technion - Israel Institute of Technology, Haifa, Israel. \texttt{chayak@technion.ac.il}. Research partially supported by Grant 635/16 from the Israel Science Foundation, the Shulamit Aloni Post-Doctoral Fellowship of the Israeli Ministry of Science and Technology, and by the Kreitman Foundation Post-Doctoral Fellowship.}
\and
Shakhar Smorodinsky\thanks{Department of Mathematics, Ben-Gurion University of the NEGEV, Be'er-Sheva, Israel.
\texttt{shakhar@math.bgu.ac.il}. Research partially supported by Grant 635/16 from the Israel Science Foundation.}
}

\date{}
\maketitle

\begin{abstract}
A  family of sets $\F$ is said to satisfy the $(p,q)$ property if among any $p$ sets in $\F$, some $q$ have a non-empty intersection. Hadwiger and Debrunner (1957) conjectured that for any $p \geq q \geq d+1$ there exists $c=c_d(p,q)$, such that any family of compact convex sets in $\mathbb{R}^d$ that satisfies the $(p,q)$ property, can be pierced by at most $c$ points. In a celebrated result from 1992, Alon and Kleitman proved the conjecture. However, obtaining sharp bounds on $c_d(p,q)$, called `the Hadwiger-Debrunner numbers', is still a major open problem in discrete and computational geometry. The best currently known lower bound on the Hadwiger-Debrunner numbers in the plane is $c_2(p,q) = \Omega( \frac{p}{q}\log(\frac{p}{q}))$ while the best known upper bound is $O(p^{(1.5+\delta)(1+\frac{1}{q-2})})$.

In this paper we improve the lower bound significantly by showing that $c_2(p,q) \geq p^{1+\Omega(1/q)}$. Furthermore, the bound is obtained by a family of lines, and is tight for all families that have a bounded VC-dimension. Unlike previous bounds on the Hadwiger-Debrunner numbers which mainly used the weak epsilon-net theorem, our bound stems from a surprising connection of the $(p,q)$ problem to an old problem of Erd\H{o}s on points in general position in the plane. We use a novel construction for the Erd\H{o}s' problem, obtained recently by Balogh and Solymosi using the \emph{hypergraph container method}, to get the lower bound on $c_2(p,3)$. We then generalize the bound to $c_2(p,q)$ for any $q \geq 3$.
\end{abstract}

\newpage

\section{Introduction}
\label{sec:intro}


\paragraph{Helly's theorem, the $(p,q)$ theorem, and Hadwiger-Debrunner numbers.} The classical Helly's theorem asserts that if in some finite family $\F$ of convex sets in $\mathbb{R}^d$, any $d+1$ sets have a non-empty intersection, then the whole family has a non-empty intersection, i.e., it can be {\em pierced} by one point. One of the most challenging extensions of  Helly's theorem was introduced by relaxing the intersection assumption into a weaker assumption called the $(p,q)$ property: Among any $p$ sets in $\F$, some $q$ have a non-empty intersection.

Clearly, not every family that satisfies the $(p,q)$ property has a non-empty intersection; still, one may hope that such a family can be pierced by a `small' number of points. Indeed, Hadwiger and Debrunner~\cite{HD57} conjectured that for all $p \geq q \geq d+1$, any family of convex sets in $\Re^d$ that satisfies the $(p,q)$ property can be pierced by a constant number of points, independent of the size of the family. The minimum such number of points is denoted by $c=c_d(p,q)$. Hadwiger and Debrunner proved their conjecture for the special case when $q>\frac{p}{2}+1$, with $c=p-q+1$; on the other hand, they showed that $p-q+1$ is a lower bound on $c_d(p,q)$ for all pairs $p\geq q$.

After 35 years, the Hadwiger-Debrunner conjecture was proved in a celebrated result of Alon and Kleitman~\cite{AK} also known as the $(p,q)$-Theorem. The upper bound on $c_d(p,q)$ yielded by the proof is $\tilde{O}(p^{d^2+d})$ (for the case $q=d+1$). Alon and Kleitman mentioned that this bound is far from being tight, and since then, the problem of obtaining tight bounds on $c_d(p,q)$ (also called `the Hadwiger-Debrunner numbers' and denoted $\HD_d(p,q)$) is a major open problem in discrete and computational geometry.

Despite extensive research, very little is known about the asymptotics of $\HD_d(p,q)$. Near optimal upper bounds were very recently obtained for very large values of $q$. For example, $\HD_d(p,q) \leq p-q+2$ for all $q>p^{\frac{d-1}{d}+\epsilon}$ \cite{KST18}). Tight bounds were also obtained for specific classes of families (e.g., families of axis-parallel rectangles, see~\cite{Dol72,KS18}), and for specific values of $p,q,d$ (see~\cite{KGT01}). Neither of these results extends to general $(p,q)$.

\paragraph{Weak epsilon-nets and their relation to $\HD_d(p,q)$.} The best currently known lower bounds on $\HD_d(p,q)$ are obtained by lower bounds on the so-called \emph{weak epsilon-nets}. For a finite family of points $\G \subset \mathbb{R}^d$ and for $\epsilon>0$, a weak $\epsilon$-net for $\G$ is a set $S$ of points (not necessarily in $\G$) such that any convex set $T \subset \mathbb{R}^d$ that contains at least $\epsilon |\G|$ points of $\G$, contains also a point of $S$.

Alon et al.~\cite{ABFK} proved that for any $d,\epsilon$ there exists a bound $f_d(\epsilon)$ such that any finite $\G \subset \mathbb{R}^d$ admits a weak $\epsilon$-net of size at most $f_d(\epsilon)$. However, the bound on $f_d(\epsilon)$ was far from being tight, and improving it has been another important open problem. In a very recent breakthrough, Rubin~\cite{Rubin18} showed that for any $\delta>0$, every $\G \subset \mathbb{R}^2$ of size $|\G|>n_0(\delta)$ admits a weak $\epsilon$-net of size at most $\epsilon^{-1.5-\delta}$. This is still far from the best known lower bound $f_d(\epsilon) = \frac{1}{\epsilon} \log^{d-1}(\frac{1}{\epsilon})$ obtained by Bukh, Matou{\v s}ek and Nivasch \cite{BMN11}, which is conjectured to be close to tight.

Weak $\epsilon$-nets are closely related to the $(p,q)$-Theorem. Indeed, for any set of points $\G$, it is easy to see that the family $\F$ of all convex sets that contain at least $\epsilon_0=q/p$ points of $\G$ satisfies the $(p,q)$ property. If the size of the smallest weak $\epsilon_0$-net for $\G$ is $\ell$, then $\F$ is a family of convex sets that satisfies the $(p,q)$ property and cannot be pierced by less than $\ell$ points. Therefore, any lower bound on $f_d(\epsilon)$ translates immediately into a lower bound on $\HD_d(p,q)$. The best known lower bound on $\HD_d(p,q)$ is of this form:
\begin{equation}\label{Eq:Lower-HD}
\HD_d(p,q) = \Omega \left(\frac{p}{q} \log^{d-1} \left(\frac{p}{q} \right)\right),
\end{equation}
following immediately from the aforementioned lower bound of Bukh et al.~\cite{BMN11} on $f_d(\epsilon)$.

While upper bounds on $f_d(\epsilon)$ do not translate directly into upper bounds on $\HD_d(p,q)$, the weak epsilon-net theorem plays a central role in the Alon-Kleitman's proof of the $(p,q)$ theorem, and the best currently known general upper bound for $\HD_d(p,q)$, obtained in~\cite[Proposition~2.6]{KST18}, is formulated in terms of $f_d(\epsilon)$:
\begin{equation}\label{Eq:Upper-HD}
\HD_d(p,q) \leq f_d \left(\Omega(p^{-1-\frac{d-1}{q-d}}) \right).
\end{equation}
In particular, in the plane we have $\HD_2(p,q) =O(p^{(1.5+\delta)(1+\frac{1}{q-2})})$ for any $\delta>0$ and $p>p_0(\delta)$, using Rubin's result~\cite{Rubin18}.

%

\paragraph{Our results.} In this paper we present the first general lower bound on $\HD_d(p,q)$ improving significantly over the best previously known bound $\HD_d(p,q) = \Omega(\frac{p}{q} \log^{d-1}(\frac{p}{q}))$:
\begin{theorem}\label{thm:main}
For any $0<\eta<1/2$ and for any $p,q \geq 3$ such that $q \leq 0.01\eta \cdot (\frac{\log p}{\log \log p})^{1/3}$, there exists a family $\F$ of lines in $\mathbb{R}^2$ which satisfies the $(p,q)$ property and cannot be pierced by less than $p^{1+\frac{1-\eta}{4q-7}}$ points. Consequently, $\HD_d(p,q) \geq p^{1+\frac{1-\eta}{4q-7}}$ for all $d \geq 2$.
\end{theorem}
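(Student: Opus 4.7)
The plan is to work in the dual. By the standard point-line duality in $\mathbb{R}^2$, a family $\F$ of $n$ lines satisfies the $(p,q)$ property if and only if the dual point set $P := \F^{\ast}$ has the property that every $p$-subset of $P$ contains $q$ collinear points, and the piercing number of $\F$ equals the minimum number of lines needed to cover $P$. Thus Theorem~\ref{thm:main} reduces to exhibiting, for each admissible pair $(p,q)$, an $n$-point set $P \subset \mathbb{R}^2$ such that \emph{(a)} no line contains more than $n^{o(1)}$ points of $P$, and \emph{(b)} every subset $S \subseteq P$ with no $q$ collinear points satisfies $|S| \leq n^{(4q-7)/(4q-6) + o(1)}$. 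Given such $P$, its $n$ dual lines satisfy the $(p,q)$ property with $p \leq n^{(4q-7)/(4q-6) + o(1)}$, while (a) forces the piercing number to be at least $n^{1-o(1)} \geq p^{1 + 1/(4q-7) - o(1)}$; this absorbs into $p^{1 + (1-\eta)/(4q-7)}$ under the quantitative hypothesis on $q$.

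For the base case $q = 3$, condition (b) is precisely the recent Balogh-Solymosi upper bound $n^{5/6 + o(1)}$ on the maximum general-position subset of an $n$-point set in the plane, established via the hypergraph container method applied to the 3-uniform hypergraph of collinear triples on a carefully designed (essentially algebraic) base set; the same construction automatically enjoys (a). For general $q \geq 3$, the natural plan is to generalize this argument by running the container theorem in its $q$-uniform form (Saxton-Thomason / Balogh-Morris-Samotij) on the $q$-uniform hypergraph whose hyperedges are the collinear $q$-tuples of a suitable base point set. Independent sets in this hypergraph are exactly the subsets avoiding a collinear $q$-tuple, and the container bound is controlled by the codegree sequence $(\Delta_1, \ldots, \Delta_{q-1})$ of the hypergraph. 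Careful estimates of these codegrees together with the right choice of base set should balance to yield the desired independence bound $n^{(4q-7)/(4q-6) + o(1)}$, which specializes to Balogh-Solymosi's $5/6$ at $q = 3$.

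The principal obstacle is the container-method calculation for general $q$: one must (i) specify a base point configuration whose $q$-uniform collinear hypergraph has a tractable codegree profile, (ii) estimate each intermediate $j$-codegree so that the $q$-uniform container theorem applies cleanly, and (iii) verify that the resulting parameters balance to the exponent $(4q-7)/(4q-6)$, exactly recovering Balogh-Solymosi at $q = 3$. A secondary difficulty is controlling the $o(1)$ error terms so that they are absorbed into $(1-\eta)/(4q-7)$; the restriction $q \leq 0.01\eta (\log p / \log \log p)^{1/3}$ in the theorem presumably reflects the accumulation of logarithmic factors through the iterated container argument.
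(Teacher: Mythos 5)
Your outline matches the paper's strategy at the top level -- dualize to a point set, reduce to bounding the largest subset with no $q$ collinear points, and prove that bound via the hypergraph container method generalizing Balogh--Solymosi -- and your target exponent $n^{(4q-7)/(4q-6)+o(1)}$ for the independence number is exactly the one the paper achieves (it gives $p=n^{2q-3.5+f}$ against a set of size $\approx n^{2q-3}$). However, there is a genuine gap at the heart of the argument: you never specify the base point configuration, and this choice is not a routine detail but the main new idea. The paper's construction lives in the $(2q-2)$-dimensional grid $[n]^{2q-2}$: one applies the container theorem to the $q$-uniform hypergraph $\h(n,2q-2,q)$ of collinear $q$-tuples in the grid (using a super-saturation lemma, Proposition~2.2, proved via an explicit family of lines with prime directions), then takes an $\alpha$-random subset with $\alpha\approx n^{-1}$, and finally deletes a point from each collinear $(2q-1)$-tuple before projecting to the plane. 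The paper explicitly notes that the choice of ambient dimension is crucial -- running the same machinery on the $q$-dimensional grid (the most ``natural'' generalization of Balogh--Solymosi's $k=3$ for $q=3$) yields a significantly weaker exponent. Your steps (i)--(iii) cannot even be attempted, let alone verified to ``balance to $(4q-7)/(4q-6)$,'' until the base set is fixed; as written, the claim that the parameters balance to that exponent is an assumption, not a deduction.

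Two smaller points. First, your condition (a) (no line carries more than $n^{o(1)}$ points) would indeed suffice for the piercing bound given the $\eta$ slack, but it does not come ``automatically'': in the paper it is engineered by a separate first-moment argument bounding the number of collinear $(2q-1)$-tuples surviving the random sparsification, followed by a deletion step, and this is one of the two constraints that jointly force $\alpha\approx n^{-1}$. Second, the independence bound (b) is not proved for the full grid but only for the sparse random subset: the container theorem bounds the \emph{number} of large independent sets of $\h(n,2q-2,q)$, and one then shows that a $p$-element independent set is unlikely to survive in an $\alpha$-random subset. Your plan, read literally, asks for a deterministic $n$-point planar set satisfying (b) outright, which conflates the container output with the final construction; the probabilistic intermediate step is where $p$ and $\alpha$ get coupled and where the exponent actually emerges.
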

Interestingly, while our lower bound construction uses a family of lines, which are, in some sense, the `simplest' convex objects, it is tight for a wide class of families -- namely, all families whose so-called \emph{VC-dimension} is bounded.

To explain this statement, a few definitions are needed. For a family of sets $\F$, a set $C$ is said to be \emph{shattered} by $\F$ if the set $\{F \cap C: F \in \F\}$ contains all $2^{|C|}$ subsets of $C$. The VC-dimension of $\F$ is $\sup \{c \in \mathbb{N}: \F \mbox{ shatters some set of cardinality } c\}$. For example, it is easy to see that the VC-dimension of any family of lines is at most $2$.

The notion VC-dimension was introduced by Vapnik and Chervonenkis~\cite{VC71},  and since then found numerous applications (e.g., to computational geometry and to machine learning) and was studied extensively in the last decades (see, e.g.,~\cite{MATOUSEK}). Haussler and Welzl~\cite{HW} proved that any family $\G$ with VC-dimension at most $r$ admits a weak $\epsilon$-net (and actually, the significantly stronger notion of `$\epsilon$-net', see~\cite{MATOUSEK}) of size $O(\frac{r}{\epsilon} \log(\frac{r}{\epsilon}))$.

Substituting the assertion of the Haussler-Welzl theorem into~\eqref{Eq:Upper-HD}, we obtain the upper bound $\HD_2(p,q) \leq O\left(p^{1+\frac{1}{q-2}} \log p \right)$ for any finite family $\F$ of convex sets in the plane with a bounded VC-dimension. Therefore, Theorem~\ref{thm:main} shows that within the class of families with a bounded VC-dimension we have $c_d(p,q) = p^{1+\Theta(1/q)}$.

\paragraph{Connection to a problem of Erd\H{o}s on points in general position in the plane.}

While the best previously known bounds on the Hadwiger-Debrunner numbers were obtained via improved bounds for the weak epsilon-net theorem, our bound stems from a surprising connection between the $(p,q)$ problem and an old problem of Erd\H{o}s regarding points in general position in the plane.

In~\cite{ERDOS1986}, Erd\H{o}s raised the following problem: What is the maximal possible $\ell=\ell(n)$ such that any set $S$ of $n$ points with no $4$ of them collinear, contains a subset of size $\ell$ in general position (that is, with no three collinear points)?

Until recently, the best known upper bound for Erd{\H{o}}s problem was $\ell(n)=o(n)$, proved by F\"{u}redi~\cite{Furedi:1991} using the Density Hales-Jewett theorem of Katznelson and Furstenberg~\cite{FURSTENBERG1989,Furstenberg1991}. In a major breakthrough, Balogh and Solymosi~\cite{BS17} proved that $\ell(n) \leq n^{5/6+\delta}$, for any $\delta>0$ and any $n>n_0(\delta)$.

The result of Balogh and Solymosi is based on a random subhypergraph of the hypergraph $\h(n,3,3)$ whose vertices are the points in the three-dimensional grid $[n]^3$, and whose hyperedges are triples of collinear points. Essentially, first a subset of the vertices of $\h(n,3,3)$ of size $\approx n^2$ is chosen randomly, and then a small part of the vertices is removed in such a way that the resulting subset contains no $4$ collinear points, while any subset of it of size $n^{\frac{5}{3}+\delta}$ contains a collinear triple. Finally, the chosen set is projected into the plane in such a way that collinearity is preserved and no new collinear triples are created. The heart of the result is an upper bound on the number of independent subsets of $\h(n,3,3)$ of size $n^{\frac{5}{3}+\delta}$ (namely, sets of $n^{\frac{5}{3}+\delta}$ vertices that do not contain a collinear triple). This upper bound is obtained using the recently proposed \emph{hypergraph container method}~\cite{BMS15,ST15}, in the first application of the method to combinatorial geometry.

\medskip

The main observation underlying our results is that an upper bound for Erd\H{o}s problem is directly translated into a lower bound on $\HD_2(p,q)$. Indeed, let $S$ be a set of $n$ points in the plane with no collinear $4$-tuple, such that any subset of $S$ of size at least $\ell(n)$ contains a collinear $3$-tuple. By point-line duality in the plane, we can transform $S$ into a family $\F$ of $n$ lines, such that no $4$ lines share a common point, while each subset of $\F$ of size $\ell(n)$ contains three lines with a common point. The latter condition means exactly that $\F$ satisfies the $(\ell(n),3)$ property. On the other hand, the former condition implies that $\F$ cannot be pierced by less than $n/3$ points. Hence, $\F$ is a family of convex sets in the plane that satisfies the $(\ell(n),3)$ property but cannot be pierced by less than $n/3$ points, and thus, $\HD_2(\ell(n),3) \geq n/3$.

Combining this observation with the result of Balogh and Solymosi, we immediately obtain the lower bound
\[
\HD_2(p,3) \geq p^{\frac{6}{5}-\delta},
\]
for all $\delta>0$ and $p>p_0(\delta)$, which is the assertion of Theorem~\ref{thm:main} in the case $q=3$. The result for a general $q \geq 3$ is much more involved, and requires generalizing the construction of Balogh and Solymosi and their argument to random subsets of the hypergraph $\h(n,2q-2,q)$ whose vertices are the points in the $(2q-2)$-dimensional grid, and whose hyperedges are collinear $q$-tuples. Interestingly, the choice of dimension is crucial for obtaining Theorem~\ref{thm:main}; applying the same technique with the $q$-dimensional grid leads to a significantly weaker result.

\paragraph{Application to a hypergraph coloring problem.} As an additional demonstration of our proof method, we apply it to a natural problem on coloring geometric hypergraphs.

The following problem was implicitly stated by Payne and Wood~\cite[Section~4]{PW13} (though, using a different terminology). For a set $P$ of $m$ points in the plane, let $H_{q}(P)$ be the (non-uniform) hypergraph whose vertex set is $P$ and whose hyperedges are all sets $\{S \subset P: S = P \cap \ell \mbox{ for some line $\ell$ }, |S| \geq q\}$.
Find the maximal possible chromatic number of $H_q(P)$ as a function of $m$, i.e., determine
\[
g_q(m) = \max_{|P|=m} \chi(H_q(P)).
\]
This is a natural question that was studied for various other geometric shapes. For example, the same problem for $q=3$, with lines replaced by axis-parallel rectangles, was studied by Ackerman and Pinchasi~\cite{AP13}.

A relatively simple probabilistic argument using the Lov\'{a}sz Local Lemma shows that $g_q(m)=O(m^{1/(q-1)})$ for all $m,q$. Using the proof method of Theorem~\ref{thm:main}, we obtain the following lower bound on $g_q(m)$.
\begin{proposition}\label{Prop:Coloring}
For any $\eta>0$, $q \geq 3$ and any $m \in \mathbb{N}$ such that $q \leq 0.005\eta (\frac{\log m}{\log \log m})^{1/4}$, we have
\[
g_q(m) \geq m^{\frac{1-\eta}{q^2-q-\eta}}.
\]
\end{proposition}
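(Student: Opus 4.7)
The plan is to apply the hypergraph-container construction behind Theorem~\ref{thm:main} to the coloring setting, using a simpler variant that works in the $q$-dimensional grid. The key elementary observation is $\chi(H) \geq |V(H)|/\alpha(H)$ for any hypergraph $H$. For $H_q(P)$ an independent set is precisely a subset of $P$ containing no collinear $q$-tuple, so it suffices to construct a set $P$ of $m$ points in $\mathbb{R}^2$ with
\[
\alpha\big(H_q(P)\big) \;\leq\; m^{1 - \frac{1-\eta}{q^2-q-\eta}}.
\]

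I would work with the $q$-uniform hypergraph $\h$ whose vertex set is the grid $[n]^q$ and whose hyperedges are the collinear $q$-tuples. In contrast to Theorem~\ref{thm:main}, which uses the $(2q-2)$-dimensional grid in order to excise all $(2q-1)$-collinear configurations (a step needed to control the piercing number of the dual line arrangement), here no excision is required: additional collinearities in the final point set only \emph{decrease} the independence number, and hence can only \emph{increase} the chromatic number. Working in ambient dimension $q$ rather than $2q-2$ simplifies the construction, at the cost of the weaker denominator $q^2-q$ in place of $4q-6$. I would then take a random subset $W\subset [n]^q$ with a carefully tuned inclusion probability $p=p(n,q)$, and apply the hypergraph container theorem to $\h$ (essentially as in the generalized Balogh--Solymosi argument underlying Theorem~\ref{thm:main}). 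The required codegree estimates are elementary: any $j\geq 2$ distinct lattice points of $[n]^q$ determine at most one line (which contains at most $n$ points), while the $j=1$ case reduces to bounding the number of lines of $[n]^q$ through a single point. A union bound over the resulting family $\calC$ of containers shows that with positive probability every subset of $W$ of size $\alpha^* := |W|^{(q^2-q-1)/(q^2-q-\eta)}$ contains a collinear $q$-tuple; the hypothesis $q\leq 0.005\eta(\log m/\log\log m)^{1/4}$ is what is needed to make this union bound quantitative.

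The final step is a generic linear projection $\pi:\mathbb{R}^q\to\mathbb{R}^2$, chosen so as to preserve the collinearity of any $q$ points of $W$ while creating no new collinear $q$-tuples; existence of such a $\pi$ follows by a standard dimension-counting genericity argument, as in Balogh--Solymosi. Setting $P = \pi(W)$ and $m = |P| = |W|$, one obtains $\alpha(H_q(P))\leq \alpha^*$, and hence $\chi(H_q(P))\geq m/\alpha^* \geq m^{(1-\eta)/(q^2-q-\eta)}$, as required. The principal obstacle is the container-method calculation in dimension $q$: one must track the codegree estimates and container sizes carefully enough for the final exponent to come out as $(q^2-q-1)/(q^2-q-\eta)$. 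This mirrors, and slightly simplifies, the corresponding calculation in the proof of Theorem~\ref{thm:main}, since no extra bookkeeping is required for long collinear configurations.
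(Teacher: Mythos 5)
There is a genuine gap at the very first step of your outline: you identify the independent sets of $H_q(P)$ with the subsets of $P$ containing no collinear $q$-tuple. These two notions coincide only when $P$ contains no $q+1$ collinear points. The hyperedges of $H_q(P)$ are the \emph{full} sections $P\cap\ell$ with $|P\cap\ell|\geq q$; if some line meets $P$ in more than $q$ points, a colour class may contain a collinear $q$-tuple on that line without containing the corresponding hyperedge, and hence be independent in $H_q(P)$ while being far from collinear-$q$-tuple-free. So extra collinearities make the hyperedges \emph{larger} and therefore (weakly) \emph{increase} $\alpha(H_q(P))$ --- the opposite of what you assert. Your container argument upper-bounds the size of the largest collinear-$q$-tuple-free subset, which is a \emph{lower} bound for $\alpha(H_q(P))$, not an upper bound, so the inequality $\chi(H_q(P))\geq m/\alpha^*$ does not follow.

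This is precisely why the excision step you propose to skip is unavoidable. With the parameters forced by the container computation in $[n]^q$ (density $\alpha\approx n^{-1-f}$), Proposition~\ref{Prop:Size_of_graph} with $r=q+1=k+1$ shows the random set still contains on the order of $n^{q-1-(q+1)f}$ collinear $(q+1)$-tuples, so without deleting a point from each of them the projected set $P$ has many lines carrying more than $q$ points. The paper's proof therefore reruns the construction of Theorem~\ref{thm:main} with the \emph{forced} choice $u=q+1$ (so that no $q+1$ points of $P$ are collinear, making every collinear $q$-tuple a genuine hyperedge of $H_q(P)$); it is this constraint, not mere convenience, that pushes the ambient dimension down to $k=q$ and yields the exponent $1+\frac{1}{q^2-q-1}$ of Proposition~\ref{Prop:Equal}. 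Your choice of $k=q$, the container calculation in $[n]^q$, the generic collinearity-preserving projection, and the final bound $\chi\geq |V|/\alpha$ all match the paper's route once the excision is restored, but as written the argument proves a lower bound on the chromatic number of the wrong hypergraph (the one whose edges are all collinear $q$-tuples rather than the maximal line sections).
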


\paragraph{Organization of the paper.} In Section~\ref{sec:prelim} we introduce some notations and prove a \emph{super-saturation} property of the hypergraph $\h(n,k,r)$ that will be used in the proof of Theorem~\ref{thm:main}. The proofs of Theorem~\ref{thm:main} and Proposition~\ref{Prop:Coloring} are presented in Section~\ref{sec:main}.

\section{Preliminaries}
\label{sec:prelim}


\subsection{Notations}

\paragraph{The hypergraph $\h=\h(n,k,r)$.} For $n,k,r \geq 3$, we define $\h=\h(n,k,r)$ to be the $r$-uniform hypergraph whose vertices are the points of the grid $[n]^k = \{1,2,\ldots,n\}^k$, such that $r$ points form a hyperedge if and only if they are collinear. Clearly, $|V(\h)|=n^k$.

\paragraph{Induced subhypergraph.} For a hypergraph $H=(V,E)$ and for a subset $V'$ of vertices, we denote by $H[V']$ the subhypergraph whose vertex set is $V'$ and whose hyperedges are $\{e \in E(H): e \subset V'\}$. We refer to $H[V']$ as the {\em induced subhypergraph} of $H$ on $V'$. Note that this definition is somewhat non-standard; the more common definition is taking the hyperedges to be $\{e \cap V': e \in E(H)\}$. However, throughout this paper we mostly deal with uniform hypergraphs for which the more common definition does not make much sense.

\paragraph{Degree and co-degree.} For a hypergraph $H=(V,E)$ and a vertex $v \in V$, the {\em degree} of $v$ is $\deg(v)=|\{e \in E: v \in e\}|$. For a set of vertices $S \subset V$, the {\em co-degree} of $S$ is $\deg(S) = |\{e \in E: S \subset e\}|$.

\paragraph{Independent set.} A set $V'$ of vertices in a hypergraph $H=(V,E)$ is called {\em independent} if it does not contain any hyperedge.

\paragraph{Coloring of a hypergraph.} A {\em proper coloring} of a hypergraph $H=(V,E)$ with $c$ colors is a function $f:V \rightarrow [c]$ such that no hyperedge is monochromatic, or equivalently, if each color class $f^{-1}(\{i\})$ is an independent set. The {\em chromatic number} of $H$, denoted by $\chi(H)$, is the minimum number of colors sufficient for a proper coloring of $H$.

\paragraph{Intersecting $q$-tuples.} For sake of brevity, we call a family that consists of $q$ sets whose intersection is non-empty an {\em intersecting $q$-tuple}. We note that throughout the paper `tuples' are \emph{unordered}.

\paragraph{Generalized Binomial coefficients.} We use the generalized Binomial coefficient defined as follows: for any $x \in \mathbb{R}$ put
$${{x}\choose{k}} = \frac{x(x-1)\cdot \ldots \cdot (x-k+1)}{k!}$$.

\paragraph{Logarithms.} All logarithms used in the paper are with respect to the natural basis, unless explicitly mentioned otherwise.

\subsection{A super-saturation property of the hypergraph $\h(n,k,r)$}

In this subsection we show that any subhypergraph of $\h(n,k,r)$ with a `not-too-small' number of vertices contains a non-negligible portion of the hyperedges of $\h(n,k,r)$. This \emph{super-saturation property} of $\h(n,k,r)$ will be a key ingredient in the application of the hypergraph container method in Proposition~\ref{Prop:Container} below.

The super-saturation property follows from two propositions: the first is an upper bound on $|E(\h(n,k,r)|$, while the second is a lower bound on $|E(\h(n,k,r)[V'])|$, for any $V'$ that is `not-so-small'. Our presentation in this section follows the argument of~\cite[Section~4]{BS17}, and we try to follow the same notations as in~\cite{BS17} as much as possible.


The following easy bound was proved in~\cite[Claim~4.1]{BS17}.
\begin{proposition}\label{Prop:Size_of_graph}
For any $n \geq k,r$, the number of hyperedges in the hypergraph $\h(n,k,r)$ satisfies:
\[
|E(\h(n,k,r))| \leq \left\lbrace
  \begin{array}{c l}
    \frac{k \cdot 2^{r+k}}{r!} \cdot n^{2k}, & r \leq k\\
    \frac{k \cdot 2^{r+k}}{r!} \cdot n^{2k} \cdot \log n, & r = k+1\\
    \frac{k \cdot 2^{r+k+1}}{r!} \cdot n^{r+k-1}, & r>k+1.
  \end{array}
\right.
\]
\end{proposition}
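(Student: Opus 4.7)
The plan is to count the collinear $r$-tuples in the grid $[n]^k$ by grouping them according to the primitive integer direction of the line on which they lie. Every line in $\mathbb{R}^k$ through two or more grid points can be written as $p + \mathbb{Z}d$ for some primitive $d \in \mathbb{Z}^k$ (i.e., with $\gcd(d_1,\ldots,d_k)=1$), and $d$ is unique up to sign. Write $a := \|d\|_\infty$. The key geometric observation is that any such line contains at most $\lfloor (n-1)/a\rfloor + 1 \leq n/a + 1$ grid points of $[n]^k$, since consecutive grid points along the line differ by $a$ in the coordinate attaining $\|d\|_\infty$.

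Given this, I would first bound, for each fixed primitive direction $d$ with $\|d\|_\infty = a$, the number of collinear $r$-tuples in $[n]^k$ with this direction. Counting \emph{ordered} $r$-tuples $(p_1,\ldots,p_r)$: choose $p_1 \in [n]^k$ ($n^k$ choices), and then choose $r-1$ further distinct grid points on the unique line through $p_1$ with direction $d$ (at most $(n/a)^{r-1}$ choices, since that line has at most $n/a$ grid points other than $p_1$). Dividing by $r!$ to pass to unordered tuples yields
\[
\#\{r\text{-tuples with direction } d\} \;\leq\; \frac{n^{k+r-1}}{r!\, a^{r-1}}.
\]

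The second step is to sum over primitive directions modulo sign. The number of integer vectors with $\|d\|_\infty = a$ equals $(2a+1)^k - (2a-1)^k \leq 2k(2a+1)^{k-1}$ by the mean value theorem, and hence, after dividing by two for the identification $d\sim -d$, there are at most $k(2a+1)^{k-1}$ primitive directions with $\|d\|_\infty = a$ modulo sign. Combining this with Step~1 and using a bound of the form $(2a+1)^{k-1} \leq C_k\cdot a^{k-1}$ with $C_k = O(2^{k-1})$ to separate the constant and the $a$-dependence, I get
\[
|E(\h(n,k,r))| \;\leq\; \frac{k\, C_k}{r!}\cdot n^{k+r-1} \cdot \sum_{a=1}^{n} a^{k-r}.
\]
The three cases of the proposition then follow from the standard estimates of the last sum: for $r\leq k$ it is at most $n^{k-r+1}$, producing the $n^{2k}$ factor; for $r = k+1$ it is the harmonic sum $H_n = O(\log n)$, producing the extra $\log n$; and for $r > k+1$ it is bounded by $\zeta(r-k) \leq \pi^2/6 < 2$, producing the $n^{r+k-1}$ bound.

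The bulk of the remaining work lies in tracking constants so that the leading coefficient matches the stated $\frac{k\cdot 2^{r+k}}{r!}$ (respectively $\frac{k\cdot 2^{r+k+1}}{r!}$). The naive estimate $(2a+1)^{k-1}\leq 3^{k-1}a^{k-1}$ is slightly too loose when $r$ is small relative to $k$; to recover the stated constants I expect to either split the sum over $a$ into the range $a < k$ (where the number of terms is absolutely bounded) and $a \geq k$ (where $(2a+1)^{k-1} \leq e\cdot(2a)^{k-1}$), or else use the explicit expansion $(2a+1)^k - (2a-1)^k = 2k(2a)^{k-1} + O(a^{k-3})$ and absorb the lower-order terms. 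This careful bookkeeping is the only real subtlety; the overall structure is the direction-by-direction counting sketched above.
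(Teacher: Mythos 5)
Your direction-by-direction count is exactly the argument behind this proposition; the paper does not reprove it but cites \cite[Claim~4.1]{BS17}, whose proof is the same decomposition (group collinear $r$-tuples by the primitive direction of their line, bound the number of grid points per line by $n/a+1$ where $a$ is the sup-norm of the direction, and sum $a^{k-r}$ over $a$). Your three cases for $\sum_a a^{k-r}$ correctly reproduce the three regimes, and for $r=k+1$ and $r>k+1$ even the crude estimate $(2a+1)^{k-1}\le 3^{k-1}a^{k-1}$ already beats the stated constants, since there $2^{r+k}\ge 2^{2k+1}>3^{k-1}$.

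The one loose end is the constant in the case $r\le k$, and you are right to flag it: there $3^{k-1}$ exceeds $2^{r+k}$ as soon as $r\lesssim k\log_2(3/2)$, so the naive estimate really does fail to give the stated coefficient, and neither of your two proposed repairs closes it as stated. The troublesome contribution comes from small $a$ (already $a=1$ contributes about $\tfrac{3^k}{2r!}n^{k+r-1}$ directions-times-tuples), and the only resource available to kill it is the gap $n^{k+r-1}=n^{2k}\cdot n^{-(k-r+1)}$ together with the hypothesis $n\ge k$, giving a factor $k^{-(k-r+1)}$; one must check that $3^{k-1}\le 2^{r+k}\,k^{\,k-r}$ (or a refinement thereof) uniformly in $3\le r\le k$, which is a genuinely fiddly verification rather than routine bookkeeping. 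I would either carry that check out explicitly or simply state the bound with a coefficient of the form $3^{k}\!/(r-2)!$, which your argument does deliver cleanly. For the purposes of this paper the distinction is immaterial: everywhere Proposition~\ref{Prop:Size_of_graph} is invoked, the entire prefactor is absorbed into an error term $n^{0.01f}$ under Condition~(3) of Proposition~\ref{Prop:Container}, which tolerates any coefficient of size $\exp(O(k\log k))$.
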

We note that Proposition~\ref{Prop:Size_of_graph} was stated in~\cite{BS17} only for $r \leq 2k$. However, exactly the same proof applies for $r>2k$ as well.

\medskip

The following proposition is a generalization of~\cite[Lemma~4.2]{BS17}, where the same assertion is proved for $r=k=3$. Since we are interested in the exact dependence of the bound on $k,r$, we present the proof.\footnote{We note that the proof in~\cite[Lemma~4.2]{BS17} contains several inaccuracies that are fixed here. In particular, the sets $U,V$ cannot be defined as in~\cite{BS17} and so we modify their definition.}
\begin{proposition}\label{Prop:Size-of-subgraph-lower}
For all $s \in [0,0.9]$, $k,r \geq 3$ and $n \geq \max(e^{100k},r^{100})$, the following holds. Let $S$ be a set of vertices of $\h(n,k,r)$ of size $n^{k-s}$. Then
\[
|E(\h(n,k,r)[S])| \geq \frac{n^{2k-(k+1)s}}{r^{k+1} \cdot (1000 \cdot 9^k)^{k+1} \cdot \log n}.
\]
\end{proposition}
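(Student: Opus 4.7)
Plan: Generalize the proof of~\cite[Lemma~4.2]{BS17} (which handles $r=k=3$) to arbitrary $r,k\ge 3$, carefully tracking the dependence on $r$ and $k$ in the constants. The argument splits naturally according to the sign of $r-k-1$. In the easy regime $r\ge k+1$, axis-parallel lines in a single coordinate direction already suffice: letting $b_y=|S\cap\{(t,y_2,\dots,y_k):t\in\mathbb{R}\}|$ for $y\in[n]^{k-1}$, we have $\sum_y b_y = n^{k-s}$, and convexity of $\binom{\cdot}{r}$ yields $\sum_y\binom{b_y}{r}\gtrsim n^{k-1+r(1-s)}/r^r$. The hypotheses $s\le 0.9$ and $n\ge r^{100}$ make $n^{1-s}\ge r$, so the binomial estimate is valid, and the exponent inequality $k-1+r(1-s)\ge 2k-(k+1)s$ reduces to $(r-k-1)(1-s)\ge 0$, which holds with room to spare.

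In the hard regime $r\le k$, I would follow the Balogh--Solymosi projection strategy. Project $S$ onto $[n]^2$ via $\pi(x)=(x_1,x_2)$, set $a_p=|S\cap\pi^{-1}(p)|$ so $\sum_p a_p=n^{k-s}$, and dyadically pigeonhole the level sets $B_i=\{p:2^{i-1}<a_p\le 2^i\}$ (there are $O(k\log n)$ levels) to extract a single dominant level with $N\cdot A\gtrsim n^{k-s}/\log n$, where $N=|B_{i^\star}|$ and $A=2^{i^\star}$. Then count collinear $r$-tuples from two complementary sources: vertical tuples inside each fiber $\pi^{-1}(p)\cong[n]^{k-2}$, contributed by axis-parallel sub-lines (again via convexity), and horizontal tuples obtained by lifting collinear $r$-tuples of the 2D configuration $B_{i^\star}$ to $[n]^k$, using a Szemer\'edi--Trotter-type 2D incidence bound together with averaging over the $(k-2)$-parameter family of non-vertical lines projecting to a given 2D line. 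Summing the two contributions and optimizing over $(N,A)$ subject to $N\le n^2$, $A\le n^{k-2}$, $NA\gtrsim n^{k-s}/\log n$ yields the required $\gtrsim n^{2k-(k+1)s}/\log n$ with the stated explicit constant.

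The main obstacle is the horizontal lifting step: a 2D line through $r$ points of $B_{i^\star}$ lifts to a $(k-2)$-parameter family of lines in $[n]^k$, and one must argue that a sufficient fraction of these lifts actually carry collinear $r$-tuples of $S$ rather than merely having collinear projections. This coupling of an incidence bound with an averaging argument over the fiber parameters is where the $9^k$-type dependence in the denominator's constant $r^{k+1}(1000\cdot 9^k)^{k+1}$ originates; tracking it carefully through the dyadic decomposition is the bulk of the technical work.
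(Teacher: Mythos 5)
Your split into two regimes is reasonable, and the easy regime $r\ge k+1$ does work: axis-parallel lines plus convexity give $n^{k-1+r(1-s)}/r^r$ collinear $r$-tuples, and $(r-k-1)(1-s)\ge 0$ together with $n\ge r^{100}$ absorbs the constants. But this is not the case the result is used for: in the main theorem the proposition is applied with $k=2q-2$ and $r=q$, so $r\le k$ always (the Balogh--Solymosi case itself is $r=k=3$). In the hard regime $r\le k$ your argument has a genuine gap, precisely at the step you flag as the main obstacle, and I do not believe it can be repaired as stated. Collinearity of the projections $\pi(x^{(1)}),\dots,\pi(x^{(r)})$ in $[n]^2$ gives no information about collinearity of $x^{(1)},\dots,x^{(r)}$ in $[n]^k$; a set $S$ could have a projection rich in collinear $r$-tuples while the fraction of lifts that are collinear upstairs is zero, so a count of collinear tuples of the projected (weighted) configuration is not a lower bound on $|E(\h(n,k,r)[S])|$. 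There are two further problems: a Szemer\'edi--Trotter-type bound is an \emph{upper} bound on incidences and cannot lower-bound the number of collinear $r$-tuples of an arbitrary planar point set (a generic $N$-point set has none); what forces collinear tuples to exist is the arithmetic structure of the grid, which must be exploited explicitly even in the 2D step. So the hard regime needs a different idea, not just more careful bookkeeping.

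The paper's proof (generalizing the actual argument of Lemma~4.2 of Balogh--Solymosi, which is not a projection argument) treats all $r,k\ge 3$ uniformly and avoids projection entirely. One constructs an explicit family $\L$ of lines $L(u,v)=\{u+\alpha v\}$ in $\mathbb{R}^k$ with base points $u\in U$ lying in a thin slab $1\le u_1\le 2n/t$ and directions $v\in V$ having a \emph{prime} leading coordinate $n/t\le v_1\le 2n/t$ that dominates the remaining coordinates. Primality of $v_1$ guarantees that distinct directions yield distinct lines, so every point of $[n]^k$ lies on at least $|V|\gtrsim n^k/(t^k\log n)$ lines of $\L$; this gives a lower bound on the number of incidences between $\L$ and an \emph{arbitrary} $S$ of size $n^{k-s}$. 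Combining this with the upper bound $|\L|\lesssim 9^k n^{2k}/(t^{k+1}\log n)$ and convexity of $x\mapsto\binom{x}{r}$, the choice $t=r\cdot 1000\cdot 9^k\, n^s$ forces the average line of $\L$ to carry at least $r$ points of $S$, which yields the claimed count with the stated constants. To complete your proof you would need to import this covering construction (or an equivalent one) in place of the projection-plus-incidence route.
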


\begin{proof}
The proof is constructive, showing an explicit set of lines that contain many collinear $r$-tuples from any subset of $[n]^k$ of size $n^{k-s}$. Let $S \subset V(\h(n,k,r))$ be such that $|S|=n^{k-s}$. Set $t=c_0 n^s$, where $c_0=c_0(k,r)$ is a parameter that will be determined below in such a way that the condition $t \leq n^{0.99}$ will hold. Define
\[
U = \{(a_1,a_2,\ldots,a_k) \in \mathbb{Z}^k: 1 \leq a_1 \leq \frac{2n}{t}, -n \leq a_2,\ldots,a_k \leq n \}
\]
and
\[
V =\{(a_1,a_2,\ldots,a_k) \in \mathbb{Z}^k: \frac{n}{t} \leq a_1 \leq \frac{2n}{t}, 0 \leq a_2,\ldots,a_k < a_1, a_1 \mbox{ is a prime number} \}.
\]
It is clear that
\begin{equation}\label{Eq:Size-of-U}
|U| \leq \frac{2n}{t} \cdot (2n+1)^{k-1} \leq 3^k \cdot \frac{n^k}{t}.
\end{equation}
Furthermore, as for all $m > 1$, the number of primes between $1$ and $m$ is at most $1.25506m/\log m$ (see~\cite[Corollary~1]{rosser1962}), we have on one hand:
\begin{equation}\label{Eq:Size-of-V-upper}
|V| \leq \frac{1.26 \cdot \frac{2n}{t}}{\log(2n/t)} \cdot \left(\frac{2n}{t}\right)^{k-1} \leq 3^k \cdot \frac{n^k}{t^k \log (2n/t)} \leq 100 \cdot 3^k \cdot \frac{n^k}{t^k \log n},
\end{equation}
where the last inequality holds since $t \leq n^{0.99}$. On the other hand, as for all $m \geq 17$, the number of primes between $1$ and $m$ is at least $m/\log m$ (see~\cite[Corollary~1]{rosser1962}), and by assumption, $\frac{2n}{t} \geq 2n^{1/100} \geq 2e^k > 17$, we have
\begin{equation}\label{Eq:Size-of-V-lower}
|V| \geq \left(\frac{\frac{2n}{t}}{\log(2n/t)} - \frac{1.26 \cdot \frac{n}{t}}{\log(n/t)} \right) \left(\frac{n}{t}\right)^{k-1} \geq \frac{0.1n^k}{t^k \log n}.
\end{equation}
Let $\L=\L(t)$ be the family of all lines containing points in $U$ and having directions in $V$. That is, for each $u \in U, v \in V$ we let $L(u,v)=\{u+\alpha v:\alpha \in \mathbb{R}\}$, so $\L = \{L(u,v): u \in U, v \in V\}$. We show that the number of collinear $r$-tuples from $S$ contained in lines $L(u,v)$ is larger than $\frac{n^{2k-(k+1)s}}{r^{k+1} \cdot (1000 \cdot 9^k)^{k+1} \cdot \log n}$, thus proving the assertion. We achieve this in four steps:
\begin{enumerate}
\item We obtain an upper bound on the size of $\L$.
\item We obtain a lower bound on the number of lines in $\L$ that pass through any specific point in $[n]^k$.
\item We obtain a lower bound on the number of incidences between points of $S$ and lines of $\L$.
\item Using the bounds on the number of incidences and on the size of $\L$, along with a convexity argument, we deduce a lower bound on the number of collinear $r$-tuples in $S$ included in lines of $\L$.
\end{enumerate}

\paragraph{Step 1: Bounding the size of $\L$.} This step is immediate. By Equations~\eqref{Eq:Size-of-U} and~\eqref{Eq:Size-of-V-upper}, we have
\begin{equation}\label{Eq:Size-of-L-upper}
|\L| \leq |U| |V| \leq 3^k \cdot \frac{n^k}{t} \cdot 100 \cdot 3^k \cdot \frac{n^k}{t^k \log n} = 100 \cdot 9^k \cdot \frac{n^{2k}}{t^{k+1}\log n}.
\end{equation}

\paragraph{Step 2: Bounding the number of lines in $\L$ that pass through any point in $[n]^k$.} For each point $x \in [n]^k$ and for each $v \in V$, there exists $u \in U$ such that $x \in L(u,v)$. Indeed, the sequence $\{x+jv: j \in \mathbb{Z}\}$ contains at least one point $u$ with $1 \leq u_1 \leq 2n/t$, and since $0 \leq v_2,\ldots,v_k<v_1$, $u$ must satisfy $-n \leq u_2,\ldots,u_k \leq n$. Thus, $u \in U$ and $x \in L(u,v)$.

Furthermore, we claim that if $v,v'$ are distinct elements of $V$ then for any $u,u' \in U$ we have $L(u,v) \neq L(u',v')$. Indeed, note that $L(u,v)= L(u',v')$ may hold only if $v' = \beta v$ for some $1 \neq \beta \in \mathbb{R}$, and in particular, $v'_1 v_2 = v_1 v'_2$. Assume to the contrary that equality holds for some $v,v'$ with $v'_1>v_1$. The prime number $v'_1$ divides the left hand side of the equation but not the right hand side, as $\max(v_1,v'_2)<v'_1$ and as $v'_1$ is a prime -- a contradiction.

It follows that each $x \in [n]^k$  belongs to at least $|V|$ distinct lines of the form $L(u,v)$. Using Equation~\eqref{Eq:Size-of-V-lower} we get:
\begin{equation}\label{Eq:Lines-through-point-lower}
\forall x \in [n]^k, |\{L \in \L: x \in L\}| \geq \frac{0.1n^k}{t^k \log n}.
\end{equation}

\paragraph{Step 3: Bounding the number of incidences between points of $S$ and lines of $\L$.} By Equation~\eqref{Eq:Lines-through-point-lower}, the number of incidences between points in $S$ and lines in $\L$ is at least
\[
n^{k-s} \cdot \frac{0.1n^k}{t^k \log n} = \frac{0.1n^{2k-s}}{t^k \log n}.
\]

\paragraph{Step 4: Bounding the number of collinear $r$-tuples.} We now count the collinear $r$-tuples included in $S$ by going over the lines in $\L$ and counting the number of collinear $r$-tuples on each line. Since the function $g$ defined by $g(x)= {{x}\choose{r}}$ if $x \geq r$ and $g(x)=0$ otherwise is convex, once the number of incidences is fixed, the number of collinear $r$-tuples is minimized when $\L$ is as large as possible and the numbers of points on all lines are equal. Substituting the upper bound on $|\L|$ obtained in Equation~\eqref{Eq:Size-of-L-upper}, we get that in this case, the average number of points of $S$ on a line in $\L$ is
\[
\left(\frac{0.1n^{2k-s}}{t^k \log n}\right) \Big{/} \left(100 \cdot 9^k \cdot \frac{n^{2k}}{t^{k+1}\log n}\right) = \frac{t}{1000 \cdot 9^k n^{s}} = \frac{c_0}{1000 \cdot 9^k},
\]
where the last equality follows from the definition of $t$.
Hence, the number of collinear $r$-tuples included in $S$ is lower bounded by
\[
{{c_0 / (1000 \cdot 9^k)}\choose{r}} \cdot 100 \cdot 9^k \cdot \frac{n^{2k}}{t^{k+1}\log n},
\]
assuming $c_0 / 1000 \cdot 9^k \geq r$. (Otherwise, we get a trivial lower bound.) In order to satisfy the assumption we take $c_0 = r \cdot 1000 \cdot 9^k$. Note that provided $s_0 \leq 0.9$ and $n \geq \max(e^{100k},r^{100})$, the condition $t = c_0 n^s \leq n^{0.99}$ is satisfied. With this choice of $t$, we obtain a lower bound of
\[
{{r}\choose{r}} \cdot 100 \cdot 9^k \cdot \frac{n^{2k}}{t^{k+1}\log n} \geq 100 \cdot 9^k \frac{n^{2k}}{r^{k+1} \cdot 1000^{k+1} \cdot 9^{k(k+1)} n^{(k+1)s}\log n} \geq \frac{n^{2k-(k+1)s}}{r^{k+1} \cdot (1000 \cdot 9^k)^{k+1} \cdot \log n}
\]
on the number of collinear $r$-tuples, completing the proof.
\end{proof}

\section{Proof of the main theorem}
\label{sec:main}

In this section we prove Theorem~\ref{thm:main}. Let us recall its statement.

\medskip \noindent \textbf{Theorem~\ref{thm:main}.}
For any $0<\eta<1/2$ and for any $p,q \geq 3$ such that $q \leq 0.01\eta \cdot (\frac{\log p}{\log \log p})^{1/3}$, there exists a family $\F$ of lines in $\mathbb{R}^2$ which satisfies the $(p,q)$ property and cannot be pierced by less than $p^{1+\frac{1-\eta}{4q-7}}$ points. Consequently, $\HD_2(p,q) \geq p^{1+\frac{1-\eta}{4q-7}}$.

\medskip \noindent The proof of the theorem consists of three stages:
\begin{enumerate}
\item \emph{Reduction stage.} We show that it is sufficient to prove that for some $n,k,u$, there exists a subset $S$ of $[n]^k$ of size at least $(u-1) \cdot p^{1+\frac{1-\eta}{4q-7}}$ that does not contain collinear $u$-tuples and also does not contain independent sets of size at least $p$ of the hypergraph $\h(n,k,q)$.

\item \emph{Upper bound on the number of independent $m$-subsets of $\h(n,k,r)$.} We obtain an upper bound on the number of independent subsets of size $m$ of the hypergraph $\h(n,k,r)$, as function of $n,k,r,m,$ and auxiliary parameters $s_0,f$ to be defined below. The idea behind this stage is apparent: if the number of independent subsets of size $p$ of $\h(n,k,q)$ is `small', then it is easier for a randomly chosen subset of the vertices of $\h(n,k,q)$ to be free of independent sets of size $p$. This stage uses the \emph{hypergraph container method}.

\item \emph{Probabilistic construction.} We construct the required set $S$ using the probabilistic method. Specifically, we consider an $\alpha$-random subset $\tilde{S}$ of $[n]^k$ for some $n,k,\alpha$. We show that for an appropriate choice of all involved parameters, with a positive probability $\tilde{S}$ does not contain independent sets of $\h(n,k,q)$ of size $p$ and contains only a small amount of collinear $u$-tuples, so that we can remove them and obtain a set $S$ of size at least $(u-1) \cdot p^{1+\frac{1-\eta}{4q-7}}$ with no collinear $u$-tuples and no independent subsets of $\h(n,k,q)$ of size $p$.
\end{enumerate}
The proof method we use follows (and generalizes) the argument of~\cite[Section~5]{BS17}, and we try to use the same notations as in~\cite{BS17} as much as possible.

The three stages of the proof are presented in the following three subsections. We conclude this section with an application of our proof method to a natural geometric hypergraph coloring problem in Section~\ref{sec:sub:coloring}.

\subsection{Reduction to subsets of $[n]^k$}

The easy reduction is obtained in the following proposition.
\begin{proposition}\label{Prop:Reduction}
Let $S \subset [n]^k$ be a set of points such that:
\begin{enumerate}
\item $S$ does not contain an independent set of size $p$ of the hypergraph $\h(n,k,q)$;

\item $S$ does not contain $u$ collinear points;

\item $|S| \geq (u-1) \cdot p^{1+\frac{1-\eta}{4q-7}}$.
\end{enumerate}
Then $S$ can be transformed into a family $\F$ of lines in $\mathbb{R}^2$ that satisfies the assertion of Theorem~\ref{thm:main}.
\end{proposition}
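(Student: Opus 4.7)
The plan is to convert $S$ into the required family of lines in two moves: first a linear projection $\pi:\mathbb{R}^k\to\mathbb{R}^2$ that faithfully preserves collinearities on $S$, and then standard point-line duality in the plane.

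\textbf{Step 1 (generic projection).} I will argue that a generic linear map $\pi$ works. Represent $\pi$ by a $2\times k$ real matrix and view its $2k$ entries as free parameters. Three points $a,b,c\in S$ that are \emph{not} collinear in $\mathbb{R}^k$ correspond to a pair of linearly independent difference vectors $b-a,c-a$; their images $\pi(a),\pi(b),\pi(c)$ become collinear exactly when the $2\times 2$ minor $\det[\pi(b-a)\mid\pi(c-a)]$ vanishes, which is a nontrivial polynomial in the entries of $\pi$. Injectivity of $\pi|_S$ is similarly a nonvanishing condition on each nonzero difference. Because $S$ is finite, these amount to finitely many polynomial conditions, each cutting out a proper algebraic subvariety of the $2k$-dimensional parameter space; their union has Lebesgue measure zero, so I can pick a $\pi$ that avoids all of them. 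The image $S':=\pi(S)$ then satisfies $|S'|=|S|$, inherits from condition~(2) the property that no $u$ points of $S'$ are collinear in $\mathbb{R}^2$, and inherits from condition~(1) the property that every $p$-subset of $S'$ contains a collinear $q$-tuple (since hyperedges of $\h(n,k,q)$ are by definition collinear $q$-tuples in $[n]^k$, and linear projections send collinear tuples to collinear tuples).

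\textbf{Step 2 (dualization and piercing bound).} I will now apply standard point-line duality in $\mathbb{R}^2$, replacing each $x\in S'$ by its dual line $\ell_x$, and setting $\F:=\{\ell_x:x\in S'\}$. Under duality, collinearity of points corresponds to concurrency of lines, so the two properties from Step~1 translate into (i) no $u$ lines of $\F$ are concurrent, and (ii) among any $p$ lines of $\F$ some $q$ share a common point, which is exactly the $(p,q)$ property required by Theorem~\ref{thm:main}. Property (i) further says that every point of $\mathbb{R}^2$ lies on at most $u-1$ lines of $\F$, so any piercing set of size $N$ covers at most $N(u-1)$ lines. Hence any piercing set has size at least $|\F|/(u-1)=|S|/(u-1)\geq p^{1+(1-\eta)/(4q-7)}$ by hypothesis~(3), matching the claimed bound.

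The only nontrivial ingredient is the genericity argument in Step~1. Since $S$ is finite and the exceptional set of matrices is a finite union of proper algebraic varieties, I expect this to pose no essential difficulty; it is the familiar ``project along a random direction'' trick already used in~\cite{BS17} for $k=3$, and it extends verbatim to arbitrary $k$. The rest of the proof is purely a translation via duality.
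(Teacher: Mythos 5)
Your proof is correct and follows essentially the same route as the paper: project $S$ into the plane preserving collinearities exactly, dualize so collinearity becomes concurrency, and bound the piercing number by $|\F|/(u-1)$ since no point lies on $u$ lines. The only difference is that you spell out the genericity argument for the projection, which the paper simply asserts (modulo the standard caveat that one should also avoid vertical connecting lines so that collinear points dualize to genuinely concurrent lines).
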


\begin{proof}
Let $S$ be a set of vertices that satisfies the hypothesis. The set $S$ can be projected into a set $S'$ of $|S|$ points in the plane in such a way that collinear point tuples stay on a line, and no new collinear point tuples are created. Applying an incidence-preserving point-line duality in $\mathbb{R}^2$, the set $S'$ can be transformed into a family $\F$ of $|S|$ lines in the plane in such a way that a set of lines in $\F$ has a common point if and only if the corresponding points in $S'$ are collinear.

By Condition~(1), any set of $p$ points in $S$ contains a collinear $q$-tuple. Hence, any set of $p$ lines in $\F$ contains a $q$-tuple of lines whose intersection is non-empty. That is, $\F$ satisfies the $(p,q)$ property.

By Condition~(2), $S$ does not contain $u$ collinear points. Thus, $\F$ does not contain $u$ lines whose intersection is non-empty. Consequently, $\F$ cannot be pierced by less than
\[
\frac{|\F|}{u-1} \geq \frac{(u-1) \cdot p^{1+\frac{1-\eta}{4q-7}}}{u-1} = p^{1+\frac{1-\eta}{4q-7}}
\]
points, where the inequality uses Condition~(3) and the equality $|\F|=|S|$. Therefore, $\F$ satisfies the assertion of Theorem~\ref{thm:main}.
\end{proof}

\subsection{On containers and independent subsets of $\h(n,k,r)$}
\label{sec:sub:container}

In this subsection we obtain an upper bound on the number of independent subsets of $\h(n,k,r)$ of a given size $m$. Following~\cite{BS17}, we use the hypergraph container method~\cite{BMS15,ST15} which has proved to be extremely powerful in obtaining such upper bounds. We start with a very brief description of the method and then we apply it in our case.

\subsubsection{The hypergraph container method}

The hypergraph container method was introduced independently by Saxton and Thomason~\cite{ST15} and by Balogh, Morris, and Samotij~\cite{BMS15}. Intuitively, for a hypergraph $H=(V,E)$ whose co-degrees are `distributed evenly', the method allows finding a relatively small family $\C$ of `not-too-large' subsets of $V$ called `containers', such that each independent set in $V$ is included in some container $C \in \C$. This, in turn, allows to bound the number of independent sets of any fixed size, as shown below.

In the few years since the method was introduced, it was applied to numerous problems in extremal graph theory, Ramsey theory, and additive combinatorics (see the survey~\cite{BMS18}). The application of the method to discrete geometry was pioneered by Balogh and Solymosi~\cite{BS17}, whose route we follow here.

The version of the method we use (i.e., Theorem~\ref{thm:container} below) yields an effective bound on $|\C|$ but does not provide a bound on the size of each container. Instead, it asserts that each container contains only a few hyperedges. This version can be used along with a super-saturation lemma which asserts that if some induced subhypergraph of $H$ has only a few hyperedges then it cannot have too many vertices. Given such a super-saturation result (which we obtained in Proposition~\ref{Prop:Size-of-subgraph-lower} above), one can apply Theorem~\ref{thm:container} sequentially a bounded number of times such that eventually, all containers become sufficiently small.

\medskip

In order to present the method, we need a few more notations. For an $r$-uniform hypergraph $H$ with an average degree $d$, and for every $j \in [r]$, let $\Delta_j$ be the maximum co-degree of a set of $j$ vertices, i.e., $\Delta_j = \max_{|S|=j} d(S)$. For $0<\tau<1$, denote
\[
\Delta(H,\tau) = 2^{{{r}\choose{2}}-1} \sum_{j=2}^r \frac{\Delta_j}{d \tau^{j-1} 2^{{{j-1}\choose{2}}}}.
\]
We use the following version of the hypergraph container theorem~\cite[Corollary~3.6]{ST15}.
\begin{theorem}[Saxton and Thomason]\label{thm:container}
Let $H$ be an $r$-uniform hypergraph on $N$ vertices. Let $0 < \epsilon,\tau < 1/2$. Suppose that we have
\[
\tau < \frac{1}{200r \cdot (r!)^2} \qquad \mbox{ and } \qquad \Delta(H,\tau) \leq \frac{\epsilon}{12r!}.
\]
Then there exists $c=c(r) \leq 2000r \cdot (r!)^3$ and a collection $\mathcal{C}$ of vertex sets such that:
\begin{enumerate}
\item Every independent set in $H$ is contained in some $A \in \mathcal{C}$;

\item For every $A \in \mathcal{C}$, we have $|E(H[A])| \leq \epsilon |E(H)|$; and

\item We have $\log |\mathcal{C}| \leq c N \tau \log(1/\epsilon) \cdot \log(1/\tau)$.
\end{enumerate}
\end{theorem}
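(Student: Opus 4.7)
The plan is to follow the algorithmic approach of Saxton and Thomason, which constructs, for every independent set $I \subseteq V(H)$, a ``fingerprint'' $S = S(I) \subseteq I$ with $|S| \leq \tau N$ together with a container $A(S) \supseteq I$ that depends only on $S$ (and on $H$ itself). The family $\mathcal{C} := \{A(S) : S \subseteq V(H), \, |S| \leq \tau N\}$ then automatically covers every independent set, yielding property (1), and a crude bound on the number of fingerprints gives
\[
|\mathcal{C}| \leq \sum_{j=0}^{\lfloor \tau N \rfloor} \binom{N}{j} \leq 2 \binom{N}{\lfloor \tau N \rfloor} \leq 2(e/\tau)^{\tau N},
\]
from which property (3) follows after taking logarithms, provided the constant $c(r)$ is chosen large enough to absorb the trailing factors.

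The construction is iterative. We maintain a current candidate container $A$ (initialized to $V(H)$) and a growing fingerprint $S$ (initialized to $\emptyset$), and we process vertices one at a time in a canonical order determined by a weighted-degree scheme on auxiliary hypergraphs obtained by bucketing edges of $H$ according to their co-degree profiles (the thresholds are powers of $\tau$ scaled by the $\Delta_j$'s). When processing a vertex $v$, we either add $v$ to $S$ (if $v \in I$) or mark $v$ as ineligible for $A$; the ordering and the thresholds depend only on $H$ and on the current $S$, so $A$ can be reconstructed from $S$ alone. The process halts once $|S|$ reaches $\lfloor \tau N \rfloor$. The technical assumption $\tau < 1/(200 r (r!)^2)$ is used to ensure that the bucketing parameters stay in a meaningful range and that the algorithm has enough room to run all its sub-procedures.

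The main technical obstacle, and the heart of the proof, is verifying property (2): that the final container satisfies $|E(H[A])| \leq \epsilon |E(H)|$. This is carried out by a weighted counting argument in which, for each $j \in \{2,\dots,r\}$, one bounds the contribution to $|E(H[A])|$ coming from $j$-sets of vertices that were still ``heavy'' in co-degree at the relevant stage of the algorithm. Each such contribution is controlled by $\Delta_j / (d \tau^{j-1})$, multiplied by combinatorial factors reflecting the recursive depth of the bucketing; these are exactly the factors $2^{\binom{r}{2}-1}$ and $2^{\binom{j-1}{2}}$ that appear in the definition of $\Delta(H,\tau)$. The hypothesis $\Delta(H,\tau) \leq \epsilon/(12 r!)$ is calibrated precisely so that, after summing over $j$, the total is at most $\epsilon |E(H)|$, with $r!$ accounting for the number of orderings of the $r$ vertices of an edge and $12$ absorbing the remaining universal constants introduced by the bucketing.
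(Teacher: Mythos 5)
First, a point of reference: the paper does not prove this statement at all --- it is quoted verbatim from \cite{ST15} (their Corollary~3.6) and used as a black box --- so your proposal has to stand on its own, and as written it is a description of the shape of the Saxton--Thomason argument rather than a proof. The entire content of the theorem is property~(2), and your treatment of it (``a weighted counting argument in which \dots each contribution is controlled by $\Delta_j/(d\tau^{j-1})$'' and ``the hypothesis is calibrated precisely so that the total is at most $\epsilon|E(H)|$'') asserts the conclusion without supplying the argument. Nothing in the sketch explains why every edge surviving inside the final container must pass through a vertex that was ``heavy'' at the relevant stage of the algorithm, nor how the bucketing by co-degree profile interacts with the reconstruction of $A$ from $S$; that is precisely where the work lies.

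Second, the architecture you describe cannot deliver property~(2). A single pass of the container algorithm, with one fingerprint $S\subseteq I$ of size at most $\tau N$, only reduces the edge count by a constant factor $1-\delta(r)$ bounded away from $1$; to reach $|E(H[A])|\le\epsilon|E(H)|$ one must iterate the basic step $O(\log(1/\epsilon))$ times, so the fingerprint is really a tuple of $O(\log(1/\epsilon))$ sets, each of size $O(\tau N)$. This iteration is exactly the origin of the factor $\log(1/\epsilon)$ in item~(3). Your count $|\mathcal{C}|\le 2(e/\tau)^{\tau N}$, i.e.\ $\log|\mathcal{C}|=O(\tau N\log(1/\tau))$, is strictly stronger than what the theorem claims, and this is a symptom of the missing iteration: with a single fingerprint of size $\tau N$ you cannot force the container down to an $\epsilon$-fraction of the edges. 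An honest proof along these lines must (i) state and prove a one-step container lemma giving a constant-factor edge reduction under the co-degree hypotheses, and (ii) iterate it, verifying that the hypothesis $\Delta(\cdot,\tau)\le\epsilon/(12r!)$ (or a suitable surrogate) persists for the induced subhypergraphs at each round. Both steps are substantial and absent from the sketch; given that the paper treats this result as imported, the cleanest fix is simply to cite \cite{ST15} rather than reprove it.
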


\subsubsection{An upper bound on the number of independent $m$-subsets of $\h(n,k,r)$}

\begin{proposition}\label{Prop:Container}
Let $0<f<s_0<0.9$, and let $n,k,r$ be natural numbers such that:
\begin{enumerate}
\item $s_0 \leq \frac{k-r+1}{k}$, and in particular, $k \geq r$;

\item $f \geq \frac{10^4 \log \log n}{\log n}$;

\item $k \leq 0.001f \cdot \frac{\log n}{\log \log n}$.
\end{enumerate}
Then for any $m \in [n]$, the number of independent sets of size $m$ in the hypergraph $\h(n,k,r)$ is at most
\[
\exp \left(n^{k-s_0-\frac{k-ks_0}{r-1}+0.3f} \right) \cdot {{n^{k-s_0+0.1f}}\choose{m}}.
\]
\end{proposition}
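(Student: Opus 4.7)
The plan is to apply the hypergraph container theorem (Theorem~\ref{thm:container}) iteratively to the hypergraph $H := \h(n,k,r)$, generalizing the argument of Balogh and Solymosi~\cite[Section~5]{BS17} for $k=r=3$. We start with the trivial container $[n]^k$ and at each step replace every ``too large'' container $C$ by the refined family produced by applying Theorem~\ref{thm:container} to the induced subhypergraph $H[C]$. We stop once every container $C$ satisfies $|E(H[C])| \leq \epsilon|E(H)|$ for a suitably chosen $\epsilon$; by Proposition~\ref{Prop:Size-of-subgraph-lower} (super-saturation), this in turn forces $|C| \leq n^{k-s_0+0.1f}$. Since every independent set of size $m$ lives inside some final container, it suffices to bound $|\C|$, as then the number of independent $m$-sets is at most $|\C|\binom{n^{k-s_0+0.1f}}{m}$.

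The key technical input is the verification of the hypothesis $\Delta(H[C],\tau) \leq \epsilon/(12r!)$ of Theorem~\ref{thm:container}. For $\h(n,k,r)$, any $j \geq 2$ vertices that extend to a collinear $r$-tuple lie on a common line in $[n]^k$, and each such line carries at most $n$ grid points; hence $\Delta_j(H[C]) \leq \Delta_j(H) \leq \binom{n-j}{r-j} \leq n^{r-j}$ for every induced subhypergraph $H[C]$. Together with Proposition~\ref{Prop:Size_of_graph}, which (using $k \geq r$ from condition~(1)) gives average degree $d = \Theta(n^k)$, a direct computation shows that the term $j=r$ dominates the sum defining $\Delta(H,\tau)$, and that choosing $\tau \approx n^{-(k-\sigma)/(r-1)}\epsilon^{-1/(r-1)}$ on a container of current ``size exponent'' $\sigma$ satisfies the required bound.

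With this choice, each application of Theorem~\ref{thm:container} contributes at most $cN\tau\log(1/\epsilon)\log(1/\tau)$ to $\log|\C|$, where $N$ is the current container size. Telescoping over the (constantly many) iterations, the dominant contribution appears at the final step, where $N\tau$ is of order $n^{k-s_0-(k-ks_0)/(r-1)}$ up to a factor of $n^{o(f)}$. The remaining $0.3f$ of slack in the exponent absorbs (i) the polylog factor $\log(1/\epsilon)\log(1/\tau)$, (ii) the factor $\epsilon^{-1/(r-1)}$, and (iii) the $\mathrm{poly}(r,k)$ prefactors from Propositions~\ref{Prop:Size_of_graph} and~\ref{Prop:Size-of-subgraph-lower} (whose logarithms scale like $k^2$ or $k\log r$). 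This is exactly the role of hypotheses (2) and (3): (3) forces $k$ to be small enough relative to $f\log n/\log\log n$ for these prefactors to fit into $n^{0.1f}$, while (2) tames the $\log n$ factors similarly. I expect the main obstacle to be the careful parameter bookkeeping across iterations, in particular ensuring that $\tau$ remains valid (that is, $\tau < (200r(r!)^2)^{-1}$ and the $\Delta$-bound continues to hold) as $C$ shrinks; this reduces to checking that $\Delta_j(H[C])$ is always controlled by the global estimate $n^{r-j}$, which is automatic since co-degrees can only decrease on induced subhypergraphs.
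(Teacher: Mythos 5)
Your proposal is correct and takes essentially the same route as the paper: the same iterative application of Theorem~\ref{thm:container} with the co-degree bound $\Delta_j \leq n^{r-j}$, super-saturation controlling both the final container sizes and the average degrees, the dominant $j=r$ term dictating $\tau \approx n^{-(k-ks)/(r-1)}$ on a container of size $n^{k-s}$ (the paper equalizes the per-step contributions with a slightly sub-optimal $\tau$ rather than letting the last step dominate, but this is cosmetic), and conditions~(2)--(3) absorbing the low-order factors. One small correction: the lower bound on the average degree of an induced subhypergraph, which is what you need since $d$ sits in the denominator of $\Delta(H,\tau)$, comes from Proposition~\ref{Prop:Size-of-subgraph-lower} and equals roughly $n^{k-ks}$ on a container of size $n^{k-s}$; Proposition~\ref{Prop:Size_of_graph} only bounds $|E|$ from above and thus cannot give $d=\Theta(n^k)$ on its own, nor does $d=\Theta(n^k)$ persist after the first iteration.
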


\noindent \textbf{Remark.} Before we present the proof, two remarks are due regarding the auxiliary parameters $s_0,f$ and Conditions~(1),(2),(3).

\medskip \noindent \emph{The parameter $f$ and Conditions~(2),(3).} The parameter $f$ is a `small error term', intended also for absorbing all low-order terms for sake of clarity. Specifically, Conditions~(2),(3) allow us to neglect all terms of the form $k!$, $r!$, $\log n$ etc. we encounter during the proof; we absorb each of them into the term $n^{f}$ (or more precisely, into the term $n^{cf}$ for a small constant $c$) immediately after its first appearance. Similarly, we use Condition~(3) to absorb terms of the form $2^{k^2}$ into the error term $n^{kf}$.

\medskip \noindent \emph{The parameter $s_0$ and Condition~(1).} This parameter helps us to determine an upper bound on the size of containers we want to achieve. Specifically, we continue applying the hypergraph container theorem sequentially until all containers are of size at most $n^{k-s_0+0.1f}$. The assumption $s_0 \leq \frac{k-r+1}{k}$ (i.e., Condition~(1)) allows us to simplify the analysis as we show below, and will be sufficient for our purposes (as shown in Appendix~\ref{app:Large-s_0}). The analysis can be performed also for larger values of $s_0$ but the result becomes more cumbersome.

\begin{proof}[Proof of Proposition~\ref{Prop:Container}]
We obtain the assertion by a sequence of applications of the hypergraph container theorem (i.e., Theorem~\ref{thm:container} above).

\paragraph{Sequential application of Theorem~\ref{thm:container}.} We start with the hypergraph $\h=\h(n,k,r)$ and introduce the notations $C^0=V(\h)=[n]^k$, $\C^0=\{C^0\}$ and $\h^0=\h[C^0]$. At Step~1, we apply Theorem~\ref{thm:container} to the hypergraph $\h^0$ and obtain a family $\C^1$ of containers. At Step~2, we consider each container $C^1_j \in \C^1$ and if $|C^1_j|>n^{k-s_0+0.1f}$ (i.e., if $C^1_j$ is not sufficiently small yet), we apply Theorem~\ref{thm:container} to the hypergraph $\h^1_j=\h[C^1_j]$. We denote by $\bar{\C}^2$ the family of all containers obtained in Step~2 (from all elements of $\C^1$) and set $\C^2 = \bar{\C}^2 \cup \{C^1_j \in \C^1: |C^1_j| \leq n^{k-s_0+0.1f}\}$ (i.e., adding to $\bar{\C}^2$ all elements of $\C^1$ which were `sufficiently small' so that Theorem~\ref{thm:container} wasn't applied to them). At Step~3, we repeat the procedure with $\C^2$ instead of $\C^1$. We continue in this fashion until for some $l$, all containers in $\C^{l}$ are of size $\leq n^{k-s_0+0.1f}$, and denote that final family of containers by $\C$.

\paragraph{Bounding the number of steps via the choice of $\epsilon$.} In all applications of Theorem~\ref{thm:container}, we take $\epsilon = n^{-0.05fk}$. As a result, the number of hyperedges of the hypergraph to which Theorem~\ref{thm:container} is applied shrinks by a factor of $n^{0.05fk}$ every time. On the other hand, the number of hyperedges in any hypergraph $\h^i_j$ to which Theorem~\ref{thm:container} is applied during our process can be bounded from below using Proposition~\ref{Prop:Size-of-subgraph-lower}, as otherwise $V(\h^i_j)<n^{k-s_0+0.1f}$ and $\h^i_j$ is `already sufficiently small'.
Hence, we can use the ratio between the number of hyperedges in $C^0$ and the number of hyperedges for which we stop applying Theorem~\ref{thm:container} to bound the number of steps in our process.

Specifically, by Proposition~\ref{Prop:Size_of_graph} we have
\[
|E(C^0)| = |E(\h(n,k,r))| \leq \frac{k \cdot 2^{r+k}}{r!} \cdot n^{2k} \leq n^{2k+0.01f} \leq n^{2k+0.01kf},
\]
where the penultimate inequality uses Condition~(3). By Proposition~\ref{Prop:Size-of-subgraph-lower}, for any $i,j$ for which Theorem~\ref{thm:container} is applied to $\h^i_j$ we have
\[
|E(\h^i_j)| \geq \frac{n^{2k-(k+1)(s_0-0.1f)}}{r^{k+1} \cdot (1000 \cdot 9^k)^{k+1} \cdot \log n} \geq n^{2k-(k+1)(s_0-0.1f)-0.01fk}
\]
(using Conditions~(2),(3)). Therefore, the process ends after at most
\[
\frac{(2k+0.01fk)-(2k-(k+1)(s_0-0.1f)-0.01fk)}{0.05fk} \leq \frac{40}{f}
\]
steps, which guarantees that the size of $\C$ will not be `too large'.

\paragraph{Reduction to a single application of the hypergraph container theorem.} It clearly follows from Theorem~\ref{thm:container} that for each $i$, any independent set in $\h$ is contained in some element of $\C^i$. Hence, by a union bound, for each $m$, the number of independent sets of size $m$ in $\h$ is at most $|\C| \cdot {{n^{k-s_0+0.1f}}\choose{m}}$. Consequently, in order to prove the assertion it is sufficient to show that $|\C| \leq \exp \left(n^{k-s_0-\frac{k-ks_0}{r-1}+0.3f} \right)$. As there are at most $40/f$ steps, and as $\frac{40}{f} \leq n^{0.1f}$ by Condition~(2), it is sufficient to show that in each single application of Theorem~\ref{thm:container}, the size of each obtained family of containers is at most $\exp \left(n^{k-s_0-\frac{k-ks_0}{r-1}+0.2f} \right)$.

\paragraph{Analysis of a single application of Theorem~\ref{thm:container}.} Consider a single application of Theorem~\ref{thm:container} at Step~$(i+1)$, i.e., an application of the theorem to some $\h^i_j=\h[C^i_j]$. Let $s=s(i,j)$ be such that $|C^i_j|=n^{k-s}$. Note that $s \leq s_0-0.1f$, as otherwise $C^i_j$ is already `sufficiently small'.

\medskip \noindent \emph{Bounding $\Delta(\h^i_j,\tau)$ as function of $\tau$.} By Proposition~\ref{Prop:Size-of-subgraph-lower}, we have
\[
|E(\h^i_j)| \geq \frac{n^{2k-(k+1)s}}{r^{k+1} \cdot (1000 \cdot 9^k)^{k+1} \cdot \log n},
\]
and thus the average degree $d$ of $\h^i_j$ satisfies
\[
d \geq \frac{r}{n^{k-s}} \cdot \frac{n^{2k-(k+1)s}}{r^{k+1} \cdot (1000 \cdot 9^k)^{k+1} \cdot \log n} = \frac{n^{k-ks}}{r^k \cdot (1000 \cdot 9^k)^{k+1} \cdot \log n} \geq n^{k-ks-0.01kf},
\]
where the last inequality follows from Conditions~(2),(3) above.

For each set $S$ of vertices of $\h^i_j$, the co-degree of $S$ is at most ${{n-|S|}\choose{r-|S|}} \leq n^{r-|S|}$. Thus, for any $2 \leq \ell \leq r$ we have $\Delta_{\ell} \leq n^{r-\ell}$. Hence, for any $\tau>0$ we have
\begin{align}\label{Eq:Summation1}
\begin{split}
\Delta(\h^i_j,\tau) &= 2^{{{r}\choose{2}}-1} \sum_{\ell=2}^r \frac{\Delta_{\ell}}{d \tau^{\ell-1}2^{{{\ell-1}\choose{2}}}} \leq n^{0.01kf} \sum_{\ell=2}^r \frac{n^{r-\ell}}{d \tau^{\ell-1}} = n^{0.01kf} \cdot \frac{1}{d \tau^{r-1}} \cdot \sum_{\ell'=0}^{r-2} (n \tau)^{\ell'} \\
&\leq \frac{n^{ks+0.02kf-k}}{\tau^{r-1}} \cdot \sum_{\ell'=0}^{r-2} (n \tau)^{\ell'},
\end{split}
\end{align}
where the term $2^{{{r}\choose{2}}}$ is absorbed into the term $n^{0.01kf}$ using Conditions~(1) and (3).

\medskip \noindent \emph{Choosing $\tau$.} In order to minimize the size of the resulting family of containers, we would like to choose $\tau$ to be as small as possible, subject to the restriction $\Delta(\h^i_j,\tau) \leq \frac{\epsilon}{12r!}$, where we fix $\epsilon = n^{-0.05kf}$ in order to bound the number of steps, as written above. We consider two cases. First we consider the extreme case $s=s_0-0.1f$, in which the value of~\eqref{Eq:Summation1} is the largest and so the restriction on $\tau$ is the strictest, and then we leverage our choice to the general case $s \leq s_0-0.1f$.

\medskip \noindent \emph{The extremal case} $s=s_0-0.1f$. In this case, we choose $\tau = n^{\frac{ks_0-k}{r-1}}$, and so the first term in the summation (i.e., the term that corresponds to $\ell'=0$) becomes $n^{-0.08kf}$. (Clearly, this is the smallest possible value of $\tau$ subject to the restriction, up to the error term $n^{f}$.) By Condition~(1), we have $s_0 \leq \frac{k-r+1}{k}$ and hence $\frac{ks_0-k}{r-1} \leq -1$. Consequently, $n \tau \leq 1$, and thus
\begin{align}\label{Cond:container1}
\begin{split}
\Delta(\h^i_j,\tau) &\leq \frac{n^{k(s_0-0.1f)+0.02kf-k}}{\tau^{r-1}} \cdot \sum_{\ell'=0}^{r-2} (n \tau)^{\ell'} \leq (r-1) \cdot \frac{n^{k(s_0-0.1f)+0.02kf-k}}{\tau^{r-1}} \\
&\leq (r-1) n^{-0.08kf} \leq \frac{n^{-0.05kf}}{12r!} = \frac{\epsilon}{12r!}.
\end{split}
\end{align}
In addition, we have $\tau \leq n^{-1} \leq \frac{1}{200r \cdot (r!)^2}$ by Conditions~(1),(3). Therefore, we can apply Theorem~\ref{thm:container} to the hypergraph $\h^i_j$ with the parameters $(\tau,\epsilon)$ we specified, to obtain a family of containers of size at most
\begin{align}\label{Eq:Upper-number-containers}
\begin{split}
\exp(cN\tau \log(1/\epsilon) \log(1/\tau)) &\leq \exp \left(2000r \cdot (r!)^3 \cdot n^{k-s_0+0.1f} \cdot n^{\frac{ks_0-k}{r-1}} \cdot 0.05kf \log n \cdot \frac{k-ks_0}{r-1} \log n \right) \\
&\leq \exp \left(n^{k-s_0-\frac{k-ks_0}{r-1}+0.2f} \right).
\end{split}
\end{align}

\medskip \noindent \emph{The general case} $s \leq s_0-0.1f$. As the number of steps in our procedure is at most $40/f$, we can choose a sub-optimal value of $\tau$ as long as the size of the family of containers it provides is not larger than the size in the case $s=s_0-0.1f$. Indeed, this increases the total amount of containers by a multiplicative factor of $40/f$ inside the exponent, which can be absorbed into the term $n^{cf}$ inside the exponent using Condition~(2).

Hence, we choose $\tau = n^{\frac{ks_0-k}{r-1}-(s_0-0.1f+s)}$ in order to obtain the same number of containers as in the case $s=s_0-0.1f$. To see that the condition~\eqref{Cond:container1} holds for this choice of $\tau$, we compare $\Delta(\h^i_j,\tau)$ with the corresponding value in the case $s=s_0-0.1f$. Compared to the case $s=s_0-0.1f$, the lower bound on $d$ is decreased by a factor of $n^{k(s_0-0.1f-s)}$, while the term $\tau^{r-1}$ is decreased by a factor of $n^{(r-1)(s_0-0.1f-s)}$. As $r-1<k$ by Condition~(1), it follows that the term $\frac{1}{d \tau^{r-1}}$ (which is the first term in the summation in~\eqref{Eq:Summation1}) is decreased. In addition, as we still have $\tau \leq n^{-1}$, the first term of the summation remains the largest one. (Note that this is where we need Condition~(1). If the condition fails then for $s=s_0-0.1f$ the last term of the summation (i.e., the term which corresponds to $\ell'=r-2$) is the largest one while for small values of $s$ the first term is the largest one. This makes the computations and the final assertion more cumbersome.) Hence, condition~\eqref{Cond:container1} holds in this case as well, and so we can apply Theorem~\ref{thm:container} as in the case $s=s_0-0.1f$ and obtain a family of containers of the same size.

\paragraph{Wrapping up the proof.} We showed that for any $\h^i_j$, the family of containers resulting from applying to it Theorem~\ref{thm:container} with the parameters $(\tau,\epsilon)$ we specified, is of size at most $\exp \left(n^{k-s_0-\frac{k-ks_0}{r-1}+0.2f} \right)$ (see~\eqref{Eq:Upper-number-containers}). As we perform at most $40/f$ steps, the total number of containers in $\C$ is at most
\[
\exp \left( \frac{40}{f} \cdot n^{k-s_0-\frac{k-ks_0}{r-1}+0.2f} \right) \leq \exp \left(n^{k-s_0-\frac{k-ks_0}{r-1}+0.3f} \right).
\]
Since the size of each container is at most $n^{k-s_0+0.1f}$ and any independent set in $\h$ is included in some container in $\C$, the number of independent sets of size $m$ is at most
\[
\exp \left(n^{k-s_0-\frac{k-ks_0}{r-1}+0.3f} \right) \cdot {{n^{k-s_0+0.1f}}\choose{m}},
\]
as asserted.
\end{proof}

\subsection{Proof of Theorem~\ref{thm:main}}
\label{sec:sub:proof}

In this subsection we present the proof of Theorem~\ref{thm:main}. Using Propositions~\ref{Prop:Size_of_graph} and~\ref{Prop:Container}, we show that for an appropriate choice of the parameters $n,k,p$ and $\alpha$, an $\alpha$-random subset $\tilde{S}$ of $[n]^k$ satisfies the conditions of Proposition~\ref{Prop:Reduction}. As explained above, this is sufficient for proving Theorem~\ref{thm:main}.

\begin{proof}[Proof of Theorem~\ref{thm:main}] The proof consists of three steps.

\medskip \noindent \textbf{Step~1: Reformulating the construction of $\tilde{S}$ as an optimization problem.}
Let $q,\eta$ be fixed. Throughout the proof, we assume that $n$ is sufficiently large as function of $q$ (the exact assumption will be specified at the end of the proof; roughly speaking, we shall assume $q \ll \sqrt{\frac{\log n}{\log \log n}}$). We introduce an `error term' $n^{f}$ and use it to absorb all lower-order terms. The value of $f$ will also be chosen at the end of the proof; roughly speaking, it will be of order $\Theta(\eta/q)$.

For some $k,p,\alpha$ to be chosen below, we consider an $\alpha$-random subset $\tilde{S}$ of $[n]^k$ (i.e., a subset of $[n]^k$ in which each point is chosen with probability $\alpha$, independently of other points).

\medskip \noindent \emph{The conditions the random subset has to satisfy.} The parameters have to be chosen such that the following conditions hold with a high probability:

\smallskip \noindent (1). $\tilde{S}$ does not contain any independent set of $\h(n,k,q)$ of size $p$.

\smallskip \noindent (2). $\tilde{S}$ is of size at least $\alpha n^k /2$ and contains at most $\alpha n^k/4$ collinear $u$-tuples.

\medskip \noindent If both conditions hold with a high probability, then we can find an explicit set $\tilde{S}$ of points that satisfies both of them, remove from $\tilde{S}$ one point from each collinear $u$-tuple, and obtain a set $S$ of size at least $\alpha n^k/4$ with no collinear $u$-tuples. This is the set $S$ required in Proposition~\ref{Prop:Reduction}.

\medskip \noindent \emph{The function we want to optimize.} Recall that by Proposition~\ref{Prop:Reduction}, the set $S$ we obtain by the probabilistic process can be transformed into a family $\F$ that satisfies the $(p,q)$ property and cannot be pierced by less than $\frac{\alpha n^k}{4(u-1)}$ points.  Hence, in order to obtain the strongest lower bound we can for the $(p,q)$ theorem, we want to make $\frac{\alpha n^k}{4(u-1)}$ as large as possible with respect to $p$, subject to the above conditions. Specifically, letting $T=T(p,q,n,\alpha,u)$ be such that $\frac{\alpha n^k}{4(u-1)}=p^T$, we are interested in maximizing
\begin{equation}\label{Eq:Opt}
T = \log_p \left(\frac{\alpha n^k}{4(u-1)} \right),
\end{equation}
and we want to show that the parameters can be chosen such that
\begin{equation}\label{Eq:T}
T \geq 1+\frac{1-\eta}{4q-7}.
\end{equation}

\medskip \noindent \emph{Modifying the conditions using Propositions~\ref{Prop:Size_of_graph} and~\ref{Prop:Container}.}

\medskip \noindent \textbf{Condition~(1).} In order to achieve this condition, it is clearly sufficient that the expected number of independent sets of size $p$ in an $\alpha$-random subset of $\h(n,k,q)$ is $o(1)$. By Proposition~\ref{Prop:Container} (applied with $f$ and a parameter $s_0$ to be determined below), the number of independent sets of size $p$ in $\h(n,k,q)$ is at most $\exp \left(n^{k-s_0-\frac{k-ks_0}{q-1}+0.3f} \right) \cdot {{n^{k-s_0+0.1f}}\choose{p}}$. For each such set, the probability that it is included in $\tilde{S}$ is $\alpha^p$. Thus, by linearity of expectation, a sufficient condition is
\[
\exp \left(n^{k-s_0-\frac{k-ks_0}{q-1}+0.3f} \right) \cdot {{n^{k-s_0+0.1f}}\choose{p}} \cdot \alpha^p = o(1).
\]
Using the standard inequalities ${{N}\choose{\ell}} \leq \frac{N^{\ell}}{\ell!}$ and $\ell! \geq (\frac{\ell}{e})^{\ell}$, this implies that a sufficient condition is
\begin{equation}\label{Eq:Cond-p-independent}
\exp \left(n^{k-s_0-\frac{k-ks_0}{q-1}+0.3f} \right) \cdot \left( \frac{e \cdot n^{k-s_0+0.1f}}{p} \right)^p \cdot \alpha^p = o(1).
\end{equation}
Note that this modification is valid only if $n$ and the `error term' $f$ are chosen in such a way that Proposition~\ref{Prop:Container} can be applied; we shall see that this restriction is the main source of the hypothesis on the relation between $p$ and $q$ in the formulation of the theorem.

\medskip \noindent \textbf{Condition~(2).} The condition $\Pr[|\tilde{S}| \geq \alpha n^k /2]=1-o(1)$ holds by a standard tail bound for Binomial random variables, unless $\alpha$ is extremely small. (We will verify it formally at the end of the proof for the specific value of $\alpha$ we choose.) By Markov's inequality, in order to prove that with a high probability, $\tilde{S}$ contains at most $\alpha n^k/4$ collinear $u$-tuples, it is sufficient to show that the expected number of collinear $u$-tuples in an $\alpha$-random subset of $[n]^k$ is $o(\alpha n^k)$. By Proposition~\ref{Prop:Size_of_graph} (assuming $u \geq k+1$; it is easy to check that choosing $u \leq k$ leads to worse results), the number of collinear $u$-tuples in $[n]^k$ (which is exactly the number of hyperedges of the hypergraph $\h(n,k,u)$) is at most $\frac{k \cdot 2^{u+k}}{u!} \cdot n^{u+k-1} \log n$. For each such $u$-tuple, the probability that it is included in $\tilde{S}$ is $\alpha^u$. Hence, by linearity of expectation, a sufficient condition is
\begin{equation}\label{Eq:Cond-u-collinear}
\frac{k \cdot 2^{u+k}}{u!} \cdot n^{u+k-1} \log n \cdot \alpha^u = o(\alpha n^k).
\end{equation}


\medskip \noindent \textbf{Step~2: Choosing the parameters.} In this step we choose the parameters one-by-one aiming at optimizing~\eqref{Eq:Opt}, subject to the single restriction~\eqref{Eq:Cond-p-independent}. We then show that the choice of parameters we obtain satisfies the second restriction~\eqref{Eq:Cond-u-collinear}, which is of course sufficient.

For sake of clarity, we \emph{omit the error term $n^{f}$ during this step} and introduce it back once all parameters are set (which is sufficient for verifying formally that our construction of $\tilde{S}$ indeed satisfies the hypothesis of Proposition~\ref{Prop:Reduction}). In addition, we note that since the parameter $u$ we choose satisfies $u \leq \log n$ (as we show below), the term $\log_n(4(u-1))$ in the target function~\eqref{Eq:Opt} can be absorbed into the error term (this holds unless the error term is extremely small; we will verify this formally below after the error term will be specified). Hence, we omit it and simplify the target function to
\begin{equation}\label{Eq:Opt2}
T = \log_p (\alpha n^k) = \frac{k+\log_n \alpha}{\log_n p}.
\end{equation}

\medskip \noindent \emph{The choice of $p$ and $\alpha$.} Assume that the parameters $k$ and $s_0$ are fixed and we want to choose $p$ and $\alpha$ optimally. Note that in order to satisfy~\eqref{Eq:Cond-p-independent}, we should choose $p,\alpha$ in such a way that the term $\alpha^p$ cancels the two former terms of~\eqref{Eq:Cond-p-independent}.

Let us choose some value of $p$ and set $\beta$ such that $p=n^{k-s_0-\beta}$. On the one hand, in order to use $\alpha^p$ to cancel the first term in~\eqref{Eq:Cond-p-independent}, we must have $k-s_0-\beta > k-s_0-\frac{k-ks_0}{q-1}$ (up to addition of lower-order terms), or equivalently,
\begin{equation}\label{Eq:beta-1}
\beta< \frac{k-ks_0}{q-1}.
\end{equation}
On the other hand, in order to cancel the second term in~\eqref{Eq:Cond-p-independent}, we must have $\alpha<n^{-\beta}$. In such a case, our target function satisfies
\[
T < \frac{k-\beta}{k-s_0-\beta},
\]
and approaches $\frac{k-\beta}{k-s_0-\beta}$ as $\alpha$ increases to $n^{-\beta}$. Assuming we take $\alpha = n^{-\beta}$ (up to lower-order terms) in order to maximize $T$, we would like to choose $\beta$ such that the expression $\frac{k-\beta}{k-s_0-\beta}$ is maximized.

It is easy to check that the function $\beta \mapsto \frac{k-\beta}{k-s_0-\beta}$ is monotone increasing, and thus, in order to maximize $T$ we would like to choose $\beta$ to be as large as possible.

However, by~\eqref{Eq:beta-1} we must take $\beta \leq \frac{k-ks_0}{q-1}$. Thus, we choose
\begin{equation}\label{Eq:p-1}
\alpha = n^{-\frac{k-ks_0}{q-1}} \qquad \mbox{ and } \qquad p=n^{k-s_0-\frac{k-ks_0}{q-1}}
\end{equation}
(up to the error term $n^{f}$).

\medskip \noindent \emph{The choice of $s_0$.} Now we assume that only the parameter $k$ is fixed and we want to choose $s_0$ optimally. Recall that following Proposition~\ref{Prop:Container}, we assume $s_0 \leq \frac{k-q+1}{k}$; the case $s_0 > \frac{k-q+1}{k}$ is considered in Appendix~\ref{app:Large-s_0}.

Choosing $\alpha$ and $p$ as in~\eqref{Eq:p-1}, the target function $T$ becomes
\[
T = \frac{k- \frac{k-ks_0}{q-1}}{k-s_0-\frac{k-ks_0}{q-1}} = 1+ \frac{s_0}{k-s_0-\frac{k-ks_0}{q-1}} = 1+ \left(\frac{q-1}{k-q+1}\right) \cdot \left(\frac{s_0}{c+s_0} \right),
\]
where $c = (k-\frac{k}{q-1}) \cdot \frac{q-1}{k-q+1}$ is fixed. (Note that $k-q+1 > 0$ by the assumption on $s_0$). It is easy to check that the function $s_0 \mapsto \frac{s_0}{c+s_0}$ is monotone increasing, and hence in order to maximize the target function we have to take $s_0$ as large as possible.
As by assumption, $s_0 \leq \frac{k-q+1}{k}$, we choose
\begin{equation}\label{Eq:s_0-1}
s_0 = \frac{k-q+1}{k}.
\end{equation}
By~\eqref{Eq:p-1}, this implies
\begin{equation}\label{Eq:p-2}
\alpha = n^{-1} \qquad \mbox{ and } \qquad p=n^{k-1-\frac{k-q+1}{k}}
\end{equation}
(up to the error term $n^{f}$).


\medskip \noindent \emph{The choice of $k$.} Now we want to choose $k$ optimally. Choosing $s_0,\alpha,$ and $p$ as in~\eqref{Eq:p-1},~\eqref{Eq:p-2}, the target function $T$ becomes
\begin{equation}\label{Eq:Target-last}
T = T(k,q) = 1+ \frac{\frac{k-q+1}{k}}{k-\frac{k-q+1}{k}-1}.
\end{equation}
It is easy to check that for a fixed $q$, the function $k \mapsto T(k,q)$ attains a maximum at $k=k_0 = (q-1) + \sqrt{(q-1)(q-2)}$, is increasing for $(q-1)-\sqrt{(q-1)(q-2)} < k < k_0$ and is decreasing for $k>k_0$. Hence, $\max_{k \in \mathbb{N}} T(k,q)$ is attained either for $k=2q-3$ or for $k=2q-2$. Substituting into~\eqref{Eq:Target-last}, we see that $T(2q-2,q)=T(2q-3,q)=1+\frac{1}{4q-7}$, which is indeed the lower bound on $T$ we wanted to obtain (up to an additive error term of $O(\frac{\eta}{q})$; see~\eqref{Eq:T}). Hence, we choose
\begin{equation}\label{Eq:k-1}
k=2q-2.
\end{equation}
By~\eqref{Eq:p-2}, this implies
\begin{equation}\label{Eq:p-3}
s_0=0.5, \qquad \alpha = n^{-1}, \qquad \mbox{ and } \qquad p=n^{2q-3.5}
\end{equation}
(up to the error term $n^{f}$).

\medskip \noindent \textbf{Remark.} Note that we can choose $k=2q-3$, which is the natural generalization of the choice $k=3$ made in the case $q=3$ in~\cite{BS17}. This would lead to the same results, but the calculations become more cumbersome. Specifically, instead of $s_0=0.5$ we would obtain $s_0=\frac{q-2}{2q-3}$, and other terms would look more complex as well. Hence, we prefer choosing $k=2q-2$.

\medskip \noindent \textbf{Step~3: Wrapping up the proof.} Following Step~2 and re-introducing the error term $n^f$, we choose the parameters $k=2q-2$, $s_0 = 0.5$, $\alpha=n^{-1-f}$, and $p=n^{2q-3.5+f}$. That is, we consider an $(n^{-1-f})$-random subset $\tilde{S}$ of $[n]^{2q-2}$ and claim that with a high probability it satisfies Conditions~(1) and~(2). Note that we do not choose the values of $f,n$ yet; we delay this choice to the end of the proof, where the intuition behind it will become apparent. However, we stress that $f,n$ will be chosen in such a way that the hypotheses of Proposition~\ref{Prop:Container} will be satisfied, and we use these hypotheses in the calculations below.

\medskip \noindent \emph{Verifying the conditions.} As shown above, in order to prove that Condition~(1) is satisfied, it is sufficient to show that~\eqref{Eq:Cond-p-independent} holds. Indeed, we have
\begin{align*}
\exp \left(n^{k-s_0-\frac{k-ks_0}{q-1}+0.3f} \right) &\cdot \left( \frac{e \cdot n^{k-s_0+0.1f}}{p} \right)^p \cdot \alpha^p \\
&= \exp \left(n^{2q-3.5+0.3f} \right) \cdot (en^{1-0.9f})^{n^{2q-3.5+f}} \cdot n^{(-1-f)n^{2q-3.5+f}}=o(1).
\end{align*}
In order to prove that Condition~(2) holds, it is sufficient to show that $\Pr[|\tilde{S}| \geq \alpha n^k /2]=1-o(1)$ and that~\eqref{Eq:Cond-u-collinear} holds. The former holds for $\alpha=n^{-1-f}$ by a standard tail estimate for Binomial random variables. As for the latter, we have
\[
\frac{k \cdot 2^{u+k}}{u!} \cdot n^{u+k-1} \log n \cdot \alpha^u = \frac{(2q-2)2^{2q-2+u}}{u!} \cdot n^{u+2q-3} \log n \cdot n^{(-1-f)u} \leq n^{2q-3+(f-fu)} = o(\alpha n^{k})
\]
for any $u \geq 3$, since $\frac{(2q-2)2^{2q-2+u}}{u!} \log n \leq n^f$ for a sufficiently large $n$ as function of $f$. (Note that this is actually another condition of $f$; however, this condition must be satisfied if $f$ is chosen in such a way that Proposition~\ref{Prop:Container} can be applied.) As for applying Proposition~\ref{Prop:Size_of_graph} we assumed $u \geq k+1$, we may choose $u=2q-1$, and so~\eqref{Eq:Cond-u-collinear} indeed holds.

\medskip \noindent \emph{Choosing the error term $f$ and deducing the required relation between $p$ and $q$.} For our choice of parameters we have
\[
\log_p \left(\frac{\alpha n^k}{4(u-1)} \right) = \frac{\log_n \left(\frac{\alpha n^k}{4(u-1)} \right)}{\log_n p} \geq \frac{2q-3-1.1f}{2q-3.5+f} \geq 1+\frac{1}{4q-7}-f,
\]
where the last inequality holds for any $q \geq 3$ and $f \leq 1$. Hence, in order to obtain the asserted bound $T \geq 1+\frac{1-\eta}{4q-7}$ we have to choose $f$ such that $f<\frac{\eta}{4q-7}$. Hence, we choose
\begin{equation}\label{Eq:f}
f=\frac{\eta}{4q}.
\end{equation}
This allows us to compute the required restriction on the relation between $n$ and $q$. Recall that we can use Proposition~\ref{Prop:Container} if three conditions are satisfied. The third condition is $k \leq 0.001f \cdot \frac{\log n}{\log \log n}$. As $f= \frac{\eta}{4q}$, this condition becomes
\[
(2q-2)4q \leq \frac{0.001 \eta \log n}{\log \log n}.
\]
To satisfy this condition we make the assumption $q \leq 0.01 \eta \sqrt{\frac{\log n}{\log \log n}}$. Finally, since $p=n^{2q-3.5+f}<n^{2q}$, a sufficient requirement on the relation between $p$ and $q$ is $q \leq 0.01 \eta (\frac{\log p}{\log \log p})^{1/3}$, and this is indeed the assumption in Theorem~\ref{thm:main}.

\medskip \noindent \emph{Concluding the proof.} We conclude that for any $(p,q)$ such that $q \leq 0.01\eta (\frac{\log p}{\log \log p})^{1/3}$, and for the values of the other parameters described above, the choice of $\tilde{S}$ satisfies Conditions~(1) and~(2) and the target function $T$ satisfies $T \geq 1+\frac{1-\eta}{4q-7}$. Therefore, the set $S$ (obtained from a specific choice of $\tilde{S}$ by removing one point from each collinear $u$-tuple) indeed satisfies the hypothesis of Proposition~\ref{Prop:Reduction}. This completes the proof of Theorem~\ref{thm:main}.
\end{proof}

\subsection{An application to a hypergraph coloring problem}
\label{sec:sub:coloring}

In this subsection we present the proof of Proposition~\ref{Prop:Coloring}. Let us recall the statement of the problem.

For a set $P$ of $m$ points in the plane, we let $H_{q}(P)$ be the (non-uniform) hypergraph whose vertex set is $P$ and whose hyperedges are all sets
\[
\{S \subset P: S = P \cap \ell \mbox{ for some line $\ell$ }, |S| \geq q\}.
\]
Our goal is to find the maximal possible chromatic number of $H_q(P)$ as function of $m$, i.e., determine
\[
g_q(m) = \max_{|P|=m} \chi(H_q(P)).
\]
We observe that the construction of Balogh and Solymosi~\cite{BS17} can be used to obtain the lower bound $g_3(m) > m^{\frac{1}{6}-f}$ for any $f>0$ and any $m>m_0(f)$. Indeed, let $f>0$ and let $P$ be a set of $m$ points in the plane such that $S$ does not contain 4 collinear points, and any subset of $S$ of size $m^{\frac{5}{6}+f}$ contains a collinear triple (as constructed in~\cite{BS17}). Consider a proper coloring of the hypergraph $H_3(p)$ with $c$ colors. If some color set contains a collinear triple, then it contains in full some hyperedge of $H_3(p)$ (since $P$ does not contain collinear $q$-tuples for $q>3$). Hence, each color set is of size $<m^{\frac{5}{6}+f}$ , and therefore, $c>m^{\frac{1}{6}-f}$.

In order to generalize this lower bound to a bound for arbitrary $q \geq 3$, we have to construct a set $S \subset \mathbb{R}^2$ such that $S$ does not contain $q+1$ collinear points, while any subset of $S$ of size $t(m,q)$ contains a collinear $q$-tuple, where we want $t(m,q)$ to be as small as possible. Note that for the above argument to work, it is crucial that $S$ contains no collinear $(q+1)$-tuples. Therefore, we cannot use our generalized construction from the proof of Theorem~\ref{thm:main}, as in that construction, the set $S$ is only guaranteed to be free of collinear $(2q-2)$-tuples. In other words, in the choice of parameters we are forced to choose $u=q+1$.

Fortunately, the analysis of the choice of parameters presented above allows us to choose the parameters under this additional restriction. We omit the details here and only note that the step which should be modified is the choice of $k$, and the target function that should be optimized becomes
\[
k \mapsto \frac{k-\frac{k}{q}}{k-\frac{k-q+1}{k}-1},
\]
in the range $k \geq q$. It can be easily checked that in the examined range, the function is decreasing, and so we choose $k=q$. As a result, the target function becomes $T=1+\frac{1}{q^2-q-1}$, and the error term $f$ is chosen to be $\frac{\eta}{q^2}$, which leads to the restriction $q \leq 0.005\eta (\frac{\log m}{\log \log m})^{1/4}$. Formally, we obtain the following corollary of our proof method:
\begin{proposition}\label{Prop:Equal}
For any $0<\eta<1/2$ and for any $p,q \geq 3$ such that $q \leq 0.01\eta (\frac{\log p}{\log \log p})^{1/4}$, there exists a family $S \subset \mathbb{R}^2$ of size $p^{1+\frac{1-\eta}{q^2-q-1}}$ such that $S$ does not contain $q+1$ collinear points, while any subset of $S$ of size $p$ contains a collinear $q$-tuple.
\end{proposition}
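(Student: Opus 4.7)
} The plan is to re-run the three-stage proof of Theorem~\ref{thm:main} essentially verbatim --- reduction to $[n]^k$, container bound via Proposition~\ref{Prop:Container}, and probabilistic construction of an $\alpha$-random subset $\tilde{S} \subseteq [n]^k$ --- modifying only the final parameter optimization to reflect the fact that the auxiliary forbidden collinearity $u$ is no longer a free parameter but is fixed at $u = q+1$ (since a proper coloring of $H_q(P)$ forces color classes to contain no collinear $q$-tuple, provided $P$ contains no collinear $(q+1)$-tuple). Concretely, once we produce a random $\tilde{S}$ satisfying (a) no independent set of $\h(n,k,q)$ of size $p$, and (b) $|\tilde{S}| \geq \alpha n^k/2$ together with at most $\alpha n^k/4$ collinear $(q+1)$-tuples, we delete one point from each remaining collinear $(q+1)$-tuple and project into the plane in a collinearity-preserving way. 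The resulting $S \subset \mathbb{R}^2$ has the required size, contains no collinear $(q+1)$-tuple, and every $p$-subset contains a collinear $q$-tuple, which is the conclusion of the proposition.

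\paragraph{Re-doing the optimization.} Writing $\alpha = n^{-\beta}$ and $p = n^{k-s_0-\beta}$ (up to the error term $n^{O(f)}$), condition (a) translates, exactly as in Section~\ref{sec:sub:proof}, into $\beta \leq (k-ks_0)/(q-1)$. Condition (b) is the only place where $u=q+1$ bites: by Proposition~\ref{Prop:Size_of_graph} with $r=q+1$, in the regime $k \geq q$ we have $|E(\h(n,k,q+1))| = O(n^{2k}\log n)$, so the expected number of collinear $(q+1)$-tuples in $\tilde{S}$ is $O(\alpha^{q+1} n^{2k} \log n)$, and demanding this to be $o(\alpha n^k)$ imposes the \emph{new} lower bound $\beta \geq k/q$. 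The pair of inequalities $k/q \leq \beta \leq (k-ks_0)/(q-1)$ is compatible only when $s_0 \leq 1/q$, and since $T = (k-\beta)/(k-s_0-\beta)$ is increasing in both $\beta$ and $s_0$ we saturate both constraints, getting $s_0 = 1/q$ and $\beta = k/q$. The target simplifies to
\[
T(k) \;=\; \frac{k(q-1)}{k(q-1)-1},
\]
which is decreasing in $k$ on $\{k \geq q\}$, so the optimum is $k = q$, giving $\alpha = n^{-1}$, $p = n^{(q^2-q-1)/q}$, and $T = 1 + \frac{1}{q^2-q-1}$. Reintroducing the error term weakens the exponent to $1 + \frac{1-\eta}{q^2-q-1}$, as required.

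\paragraph{Main obstacle: calibrating the error term.} Because the exponent $1 + \frac{1}{q^2-q-1}$ is much closer to $1$ than its counterpart in Theorem~\ref{thm:main}, the error term $f$ that relaxes it to $1 + \frac{1-\eta}{q^2-q-1}$ must be chosen as small as $f = \eta/q^2$ (rather than $\eta/q$). On the other hand, Proposition~\ref{Prop:Container} requires $k \leq 0.001 f \log n/\log\log n$, and with $k = q$ this reads $q^3 \leq O(\eta \log n/\log\log n)$; combining with $p \approx n^q$, so that $\log n \approx \log p/q$, yields after rearranging the fourth-root restriction $q \leq O(\eta (\log p/\log\log p)^{1/4})$ appearing in the proposition. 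With $f$ so chosen, the remaining bookkeeping --- absorbing $2^{O(k)}$, $q!$ and $\log n$ factors into $n^{O(f)}$, Markov's inequality for condition (b), and the standard binomial tail estimate for $|\tilde{S}| \geq \alpha n^k/2$ --- is identical to Section~\ref{sec:sub:proof} and introduces no genuinely new difficulty.
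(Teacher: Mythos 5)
Your proposal is correct and follows essentially the same route as the paper, which proves Proposition~\ref{Prop:Equal} by re-running the machinery of Theorem~\ref{thm:main} with $u=q+1$ forced, re-optimizing to get $k=q$, $\alpha=n^{-1}$, $p=n^{(q^2-q-1)/q}$, $T=1+\frac{1}{q^2-q-1}$, and error term $f=\eta/q^2$ yielding the fourth-root restriction on $q$. The only (immaterial) difference is the intermediate parametrization: the paper keeps $s_0=\frac{k-q+1}{k}$ and shrinks $\alpha$ to $n^{-k/q}$ separately, giving the target $k\mapsto\frac{k-k/q}{k-\frac{k-q+1}{k}-1}$, whereas you couple $\alpha$ and $p$ through a single $\beta$ and take $s_0=1/q$; both functions are decreasing on $k\geq q$ and coincide at the optimum $k=q$.
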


Now we are ready to present the proof of Proposition~\ref{Prop:Coloring}. Let us recall its formulation.

\medskip \noindent \textbf{Proposition~\ref{Prop:Coloring}.} For any $\eta>0$, $q \geq 3$ and any $m \in \mathbb{N}$ such that $q \leq 0.005\eta (\frac{\log m}{\log \log m})^{1/4}$, we have
\[
g_q(m) \geq m^{\frac{1-\eta}{q^2-q-\eta}}.
\]

\begin{proof}[Proof of Proposition~\ref{Prop:Coloring}]
Let $p=m^{1-\frac{1-\eta}{q^2-q-\eta}}$. It is easy to check that $q \leq 0.01\eta (\frac{\log p}{\log \log p})^{1/4}$, and thus, Proposition~\ref{Prop:Equal} can be applied with $\eta$ and the pair $(p,q)$. The proposition asserts the existence of a set $P$ of size $p^{1+\frac{1-\eta}{q^2-q-1}}=m$, such that any subset of $P$ of size $p$ contains a collinear $q$-tuple. As $P$ (provided by the proposition) does not contain collinear $(q+1)$-tuples, this implies that in any proper coloring of the hypergraph $H_q(P)$, the size of each color class is less than $p=m^{1-\frac{1-\eta}{q^2-q-\eta}}$. Hence, $\chi(H_q(P)) > m^{\frac{1-\eta}{q^2-q-\eta}}$. This completes the proof.
\end{proof}

\subparagraph*{Acknowledgements}

The authors are grateful to Charles Wolf for valuable discussions and suggestions.

\bibliographystyle{alpha}
\bibliography{references}

\appendix

\section{Analysis of the case $s_0 > \frac{k-q+1}{k}$}
\label{app:Large-s_0}

The choice of parameters in the proof of Theorem~\ref{thm:main} made an extra assumption: $s_0 \leq \frac{k-q+1}{k}$. In this appendix we complement the proof by showing that choosing $s_0>\frac{k-q+1}{k}$ does not lead to a better lower bound in the $(p,q)$ theorem.

Essentially, the argument goes as follows. We first show that if $s_0$ is increased then while the maximal size of containers decreases, the total number of containers $|\C|$ increases. Then we show that the increase of $|\C|$ forces us to increase $p$ (thus, obtaining a $(p,q)$ property with a larger $p$). In order to compensate for the increase of $p$, we have to increase $\alpha$ (thus, increasing the size of the set $S$ which satisfies the $(p,q)$ property). We show that this increase necessitates us to increase $u$, in such a way that the lower bound $|S|/(u-1)$ on the piercing number of $S$ does not increase, and so we obtain a worse relation between the piercing number $\HD_2(p,q)$ and $p$. The argument is given in full in the following proposition.

\begin{proposition}[Informal]
Taking $s_0 > \frac{k-q+1}{k}$ does not lead to an improved lower bound for the $(p,q)$ theorem.
\end{proposition}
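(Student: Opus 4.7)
The plan is to follow the three-stage structure of the proof of Theorem~\ref{thm:main}, this time allowing $s_0 > \frac{k-q+1}{k}$, and to show that the resulting value of the target function $T=\log_p(\alpha n^k/(u-1))$ cannot exceed the value $1+\frac{1}{4q-7}$ already obtained in the main case.

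First, I would re-examine the proof of Proposition~\ref{Prop:Container}. The choice $\tau=n^{(ks_0-k)/(r-1)}$ made there was tailored to the assumption $s_0 \leq \frac{k-r+1}{k}$, which guarantees $n\tau\leq 1$ and makes the $\ell'=0$ term of the sum $\sum_{\ell'=0}^{r-2}(n\tau)^{\ell'}$ in~\eqref{Eq:Summation1} dominant. For $s_0>\frac{k-r+1}{k}$ the opposite regime arises: every feasible $\tau$ satisfies $n\tau>1$ and the $\ell'=r-2$ term dominates, so that the constraint $\Delta(\h^i_j,\tau)\leq \epsilon/(12r!)$ forces (up to $n^{O(f)}$ error factors) the threshold $\tau\gtrsim n^{ks_0+r-k-2}$. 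Running the iterative container argument with this new $\tau$ yields a family $\mathcal{C}$ of containers of size at most
\[
|\mathcal{C}| \leq \exp\!\bigl(n^{(k-1)s_0+r-2+O(f)}\bigr),
\]
which matches the bound of Proposition~\ref{Prop:Container} at $s_0=\frac{k-r+1}{k}$ but grows strictly faster than its extrapolation for larger $s_0$.

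Next I would rerun the parameter choice of Step~2 in Section~\ref{sec:sub:proof} using this enlarged container bound. Writing $p=n^{k-s_0-\beta}$ and $\alpha=n^{-\beta'}$, the analogue of~\eqref{Eq:Cond-p-independent} now forces $\beta \leq k-ks_0-r+2$, an upper bound that is strictly less than $1$ as soon as $s_0>\frac{k-r+1}{k}$. In parallel, going through the three regimes $u\leq k$, $u=k+1$, $u\geq k+2$ of Proposition~\ref{Prop:Size_of_graph} shows that~\eqref{Eq:Cond-u-collinear} forces $\beta'\geq 1+O(f)$ in every regime, while the factor $u-1$ appearing in $\alpha n^k/(u-1)$ is polynomial in $k$ and hence absorbed into the error term. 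The best choice is therefore $\beta'=1$, $\beta=k-ks_0-r+2$, giving $\alpha=n^{-1}$, $p=n^{(k-1)s_0+r-2}$ and
\[
T = \frac{k-1}{(k-1)s_0 + r - 2}
\]
up to the $n^{O(f)}$ error.

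The conclusion follows by a short monotonicity check. Setting $k=2q-2$ and $r=q$, the denominator $(2q-3)s_0+q-2$ is strictly increasing in $s_0$, so $T$ strictly decreases as $s_0$ grows past $\tfrac{1}{2}=\frac{k-q+1}{k}$, with equality $T=1+\frac{1}{4q-7}$ at the boundary. Allowing $k$ to vary as well gives no improvement, since evaluating $\frac{k-1}{(k-1)s_0+r-2}$ at $s_0=\frac{k-r+1}{k}$ returns precisely the target function~\eqref{Eq:Target-last} whose maximum over $k$ was shown in the main proof to be $1+\frac{1}{4q-7}$. The main obstacle is the first step: carefully redoing the container analysis when the dominant term of the sum in~\eqref{Eq:Summation1} switches, and verifying that all the $n^{O(f)}$ error terms accumulated through the new choice of $\tau$ and through the at-most-$40/f$ iterations of Theorem~\ref{thm:container} can still be absorbed under Conditions~(1)--(3) of Proposition~\ref{Prop:Container}; once this is in place, the rest of the argument is essentially a monotonicity computation.
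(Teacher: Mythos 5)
Your proposal reaches the correct conclusion by a genuinely different route from the paper. You redo the container analysis in the regime $n\tau>1$, where the $\ell'=r-2$ term of the sum in~\eqref{Eq:Summation1} dominates, extract the new threshold $\tau\approx n^{ks_0+r-k-2}$ and container count $\exp(n^{(k-1)s_0+r-2+O(f)})$, rerun the optimization to get $T=\frac{k-1}{(k-1)s_0+r-2}$, and finish by monotonicity in $s_0$; I checked that this formula agrees with~\eqref{Eq:Target-last} at the boundary $s_0=\frac{k-r+1}{k}$ and that the constraints $\beta\leq k-ks_0-r+2$ and $\alpha\leq n^{-1+O(f)}$ are the right ones. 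The paper instead argues softly, without recomputing anything in the new regime: increasing $s_0$ only adds further applications of Theorem~\ref{thm:container}, so $|\C|$ cannot decrease and hence $p$ cannot decrease; meanwhile Condition~(2) caps $\alpha n^k/(4(u-1))$ at roughly $n^{k-1}$ for \emph{any} choice of parameters, so $T=\log_p(\alpha n^k/(4(u-1)))\leq\log_p n^{k-1}$ cannot exceed its old value. The paper's argument buys robustness --- it needs no new supersaturation or co-degree computations and covers all $s_0$ uniformly --- while yours buys an explicit quantitative decay rate for $T$ in $s_0$. Two caveats on your version: your choice $\tau\approx n^{ks_0+r-k-2}$ is only feasible ($\tau<1$) for $s_0<\frac{k-r+2}{k}$, so for larger $s_0$ you would still need a separate (trivial or paper-style) argument that the method yields nothing at all; and, as you yourself flag, the intermediate iterations with $s<s_0-0.1f$ require care because the dominant term of~\eqref{Eq:Summation1} switches back to $\ell'=0$ for large containers, so the ``sub-optimal $\tau$'' trick of the paper must be re-verified in both sub-regimes. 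Since the proposition and the paper's own proof are explicitly informal, these are acceptable at the stated level of rigor.
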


\begin{proof}[Informal proof.]

Recall that the target function we want to optimize is
\begin{equation}\label{Eq:Opt-App}
T = \log_p \left(\frac{\alpha n^k}{4(u-1)} \right).
\end{equation}

\paragraph{Step 1: The value of $p$ is not decreased.} We claim that the value of $p$ cannot be decreased by increasing $s_0$. Indeed, increasing $s_0$ means that we apply the hypergraph container theorem more times than in the case $s_0 \leq \frac{k-q+1}{k}$ (specifically, until all container sets become smaller than $n^{k-s_0+0.1f} < n^{k-\frac{k-q+1}{k}+0.1f}$). This can be viewed as applying Theorem~\ref{thm:container} until all containers become as small as $n^{k-\frac{k-q+1}{k}+0.1f}$ and then applying it several more times. Thus, the total number of containers $\C$ does not decrease.

Recall that Condition~(1) which the random choice of $\tilde{S}$ must satisfy is that with a high probability, $\tilde{S}$ does not contain an independent set of size $p$ of $\h(n,k,q)$. By applying the hypergraph container theorem, we replaced this condition by
\begin{equation}\label{Eq:Large-s_0-0}
|\C| \cdot {{n^{k-s_0+0.1f}}\choose{p}} \cdot \alpha^p = o(1)
\end{equation}
(see~\eqref{Eq:Cond-p-independent}; note that the value $|\C|$ replaces the value $\exp \left(n^{k-s_0-\frac{k-ks_0}{q-1}+0.3f} \right)$ in~\eqref{Eq:Cond-p-independent}, since in our case we cannot apply Proposition~\ref{Prop:Container}, as we now assume $s_0>\frac{k-q+1}{k}$).

As shown in the argument explaining the choice of $p$ in Step~2 of the proof of Theorem~\ref{thm:main} (which does apply for any choice of $s_0$), in order to satisfy~\eqref{Eq:Large-s_0-0}, we must choose $p,\alpha$ in such a way that the term $\alpha^p$ cancels the two former terms of~\eqref{Eq:Large-s_0-0}. In particular, in order to use $\alpha^p$ to cancel the first term in~\eqref{Eq:Large-s_0-0}, we must have
$\log_n p > \log_n (|\C|)$. In the proof of Theorem~\ref{thm:main}, we eventually choose $p$ in such a way that $\log_n p = \log_n (|\C|)$, up to the error term $n^{f}$.

Since in our case, $|\C|$ is not decreased, this implies that $p$ cannot be decreased (up to the error term $n^{f}$).

\paragraph{Step 2: In order to increase $\alpha$ we must increase $u$ accordingly.} As $p$ is not decreased, in order to increase the target value $T$ we must increase the value $\alpha$. (Note that decreasing $u$ essentially does not increase $T$ as in the proof of Theorem~\ref{thm:main} $u$ is chosen to be $2q-1<\log n$, and thus its influence on $T$ is negligible.) However, increasing $\alpha$ makes it more complex for our choice of $\tilde{S}$ to satisfy Condition~(2) (which asserts that with a high probability, $\tilde{S}$ contains only a `small' amount of collinear $u$-tuples). Specifically, as shown in the first step of the proof of Theorem~\ref{thm:main}, in order to satisfy Condition~(2) we need
\[
\frac{k \cdot 2^{u+k}}{u!} \cdot n^{u+k-1} \log n \cdot \alpha^u = o(\alpha n^k).
\]
Equivalently, we need
\begin{equation}\label{Eq:Large-s_0-1}
\frac{k \cdot 2^{u+k}}{u!} \cdot \log n = o \left( (\alpha n)^{1-u} \right).
\end{equation}
Using the standard inequality $u! < eu \cdot (u/e)^u$ and absorbing the terms $e \cdot k \cdot 2^k$ and $\log n$ into the error term $n^{f}$ which is not displayed here,~\eqref{Eq:Large-s_0-1} simplifies to
\[
\frac{(2e)^u}{u^{u+1}} = o \left( (\alpha n)^{1-u} \right),
\]
or equivalently,
\[
\alpha n = o \left( \frac{u^{(u+1)/(u-1)}}{(2e)^{u/(u-1)}} \right) = o(u).
\]
Therefore, we must have
\begin{equation}\label{Eq:Large-s_0-2}
\frac{\alpha}{4(u-1)} = o(n^{-1}).
\end{equation}

\paragraph{Step 3: Overall, we do not obtain a stronger $(p,q)$ theorem.} Substituting~\eqref{Eq:Large-s_0-2} into the target function $T$ (see~\eqref{Eq:Opt-App}), we obtain
\[
T = \log_p \left(\frac{\alpha n^k}{4(u-1)} \right) \leq \log_p n^{k-1}.
\]
That is, while $p$ increases, the term inside the logarithm is not larger than $n^{k-1}$, which is (up to the error term $n^{f}$) the term inside the logarithm chosen in the proof of Theorem~\ref{thm:main}. Therefore, the target function $T$ does not increase, compared to its value in Theorem~\ref{thm:main}, as asserted.
\end{proof}

\end{document}